\date{}
\title{\vspace{-0.8cm}Saturation in random graphs }
\author{
D\'aniel Kor\'andi \thanks{Department of Mathematics, ETH, 8092 Zurich. Email: daniel.korandi@math.ethz.ch.}
\and
Benny Sudakov \thanks{Department of Mathematics, ETH, 8092 Zurich.
Email: benjamin.sudakov@math.ethz.ch. 
Research supported in part by SNSF grant 200021-149111.}
}
\theoremstyle{plain}
\newtheorem{THM}{Theorem}[section]
\newtheorem*{THM*}{Theorem}
\newtheorem{LEMMA}[THM]{Lemma}
\newtheorem{DEF}[THM]{Definition}
\newtheorem{CLAIM}[THM]{Claim}
\newtheorem*{OBS}{Observation}
\theoremstyle{definition}
\newcommand{\Prb}{\mathbf{P}}
\newcommand{\Exp}{\mathbf{E}}
\newcommand{\floor}[1]{\left\lfloor #1 \right\rfloor}
\newcommand{\subs}{\subseteq}
\newcommand{\eps}{\varepsilon}
\newcommand{\Bin}{Bin}
\newcommand{\mB}{\mathcal{B}}
\newcommand{\polylog}{\textrm{polylog }}
\newcommand{\sat}{sat}
\newcommand{\wsat}{w\textrm{-}sat}
\newcommand{\ex}{ex}
\newcommand{\loga}{\log_{\alpha}}
\newcommand{\logb}{\log_{\beta}}
\newcommand{\Grnp}{G^{r}(n,p)}
\newcommand{\Krs}{K^{r}_s}
\newcommand{\Krr}{K^{r}_{r+1}}
\newcommand{\Krt}{K^{r}_t}
\newcommand{\Krn}{K^{r}_n}
\begin{document}
\maketitle

\begin{abstract}
A graph $H$ is $K_s$-saturated if it is a maximal $K_s$-free graph, i.e., $H$ contains no clique on $s$ vertices, but the addition of any missing edge creates one. The minimum number of edges in a $K_s$-saturated graph was determined over 50 years ago by Zykov and independently by Erd\H{o}s, Hajnal and Moon. In this paper, we study the random analog of this problem: minimizing the number of edges in a maximal $K_s$-free subgraph of the Erd\H{o}s-R\'enyi random graph $G(n,p)$. We give asymptotically tight estimates on this minimum, and also provide exact bounds for the related notion of weak saturation in random graphs. Our results reveal some surprising behavior of these parameters.
\end{abstract}

\section{Introduction}

For some fixed graph $F$, a graph $H$ is said to be $F$-saturated if it is a maximal $F$-free graph, i.e., $H$ does not contain any copies of $F$ as a subgraph, but adding any missing edge to $H$ creates one. The saturation number $\sat(n,F)$ is defined to be the minimum number of edges in an $F$-saturated graph on $n$ vertices. Note that the maximum number of edges in an $F$-saturated graph is exactly the extremal number $\ex(n,F)$, so the saturation problem of finding $\sat(n,F)$ is in some sense the opposite of the Tur\'an problem.

The first results on saturation were published by Zykov \cite{Z49} in 1949 and independently by Erd\H{o}s, Hajnal and Moon \cite{EHM64} in 1964. They considered the problem for cliques and showed that $\sat(n,K_s)=(s-2)n-\binom{s-1}{2}$. Here the upper bound comes from the graph consisting of $s-2$ vertices connected to all other vertices. Since the 1960s, the saturation number $\sat(n,F)$ has been extensively studied for various different choices of $F$. For results in this direction, we refer the interested reader to the survey \cite{FFS}.

In the present paper we are interested in a different direction of extending the original problem, where saturation is restricted to some host graph other than $K_n$. For fixed graphs $F$ and $G$, we say that a subgraph $H\subs G$ is $F$-saturated in $G$ if $H$ is a maximal $F$-free subgraph of $G$. The minimum number of edges in an $F$-saturated graph in $G$ is denoted by $\sat(G,F)$. Note that with this new notation, $\sat(n,F)=\sat(K_n,F)$.

A question of this type already appeared in the above mentioned paper of Erd\H{o}s, Hajnal and Moon. They proposed the bipartite analog of the saturation problem, and even formulated a conjecture on the value of $\sat(K_{n,m},K_{s,s})$. This conjecture was independently verified by Wessel \cite{W66} and Bollob\'as \cite{B67}, while general  $K_{s,t}$-saturation in bipartite graphs was later studied in \cite{GKS15}. Several other host graphs have also been considered, including complete multipartite graphs \cite{FJPV, R} and hypercubes \cite{CG08,JP,MNS}.

\medskip
In recent decades, classic extremal questions of all kinds are being extended to random settings. Hence it is only natural to ask what happens with the saturation problem in random graphs. As usual, we let $G(n,p)$ denote the Erd\H{o}s-R\'enyi random graph on vertex set $[n]=\{1,\ldots,n\}$, where two vertices $i,j\in [n]$ are connected by an edge with probability $p$, independently of the other pairs. In this paper we study $K_s$-saturation in $G(n,p)$.

The corresponding Tur\'an problem of determining $\ex(G(n,p),K_s)$, the maximum number of edges in a $K_s$-saturated graph in $G(n,p)$, has attracted a considerable amount of attention in recent years. The first general results in this direction were given by Kohayakawa, R\"odl and Schacht \cite{KRS04}, and independently Szab\'o and Vu \cite{SV03} who proved a random analog of Tur\'an's theorem for large enough $p$. This
problem was resolved by Conlon and Gowers \cite{CG}, and independently by Schacht \cite{S}, who determined the correct range of edge probabilities where the Tur\'an-type theorem holds. The powerful method of hypergraph containers, developed by Balogh, Morris and Samotij \cite{BMS15} and by Saxton and Thomason \cite{ST15}, provides an alternative proof. Roughly speaking, these results establish that for most values of $p$, the random graph $G(n,p)$ behaves much like the complete graph as a host graph, in the sense that $K_s$-free subgraphs of maximum size are essentially $s-1$-partite.

\medskip
Now let us turn our attention to the saturation problem. When $s=3$, the minimum saturated graph in $K_n$ is the star. Of course we cannot exactly adapt this structure to the random graph, because the degrees in $G(n,p)$ are all close to $np$ with high probability, but we can do something very similar. Pick a vertex $v_1$ and include all its incident edges in $H$. This way, adding any edge of $G(n,p)$ induced by the neighborhood of $v_1$ creates a triangle, so we have immediately taken care of a $p^2$ fraction of the edges (which is the best we can hope to achieve with one vertex). Then the edges adjacent to some other vertex is expected to take care of a $p^2$ fraction of the remaining edges, and so on: we expect every new vertex to reduce the number of remaining edges by a factor of $(1-p^2)$.

Repeating this about $\log_{1/(1-p^2)} \binom{n}{2}$ times, we obtain a  $K_3$-saturated bipartite subgraph containing approximately $pn\log_{1/(1-p^2)} \binom{n}{2}$ edges. It feels natural to think that this construction is more or less optimal. Surprisingly, this intuition turns out to be incorrect. Indeed, we will present an asymptotically tight result which is better by a factor of $\frac{p\log(1-p)}{\log(1-p^2)}>1$. 
Moreover, rather unexpectedly, the asymptotics of the saturation numbers do not depend on $s$, the size of the clique.

\begin{THM} \label{thm:main}
Let $0<p<1$ be some constant probability and $s\ge 3$ be an integer. Then
\[ \sat(G(n,p),K_s)=(1+o(1))n\log_{\frac{1}{1-p}} n  \]
with high probability.
\end{THM}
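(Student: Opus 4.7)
I would prove the theorem by establishing matching upper and lower bounds, each holding whp over $G(n,p)$. For the \textbf{upper bound}, the naive bipartite construction described in the introduction is suboptimal by a constant factor, so I would instead use a non-bipartite greedy procedure. Fix a uniformly random ordering $v_1,\ldots,v_n$ of the vertices and process them sequentially. When processing $v_i$, iterate through $j<i$ and include the edge $v_iv_j$ in $H$ if and only if $v_iv_j\in G(n,p)$ and no previously included vertex $v_k$ satisfies $v_kv_i,\,v_kv_j\in H$ at that moment. This keeps $H$ triangle-free, and every non-$H$ edge of $G(n,p)$ is saturated by a pre-existing triangle, so $H$ is $K_3$-saturated. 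For $s\ge 4$, I would additionally select a fixed $(s-3)$-clique $C\subs G(n,p)$ (which exists whp), include every $G(n,p)$-edge from $C$ to its common neighborhood, and run the greedy procedure only \emph{inside} the common neighborhood of $C$; any triangle there, together with $C$, provides the $K_{s-2}$ in common neighborhoods required for $K_s$-saturation. The key estimate is that the expected number of edges added while processing $v_i$ decays geometrically in $i$ at rate $(1-p)$, so that summing gives $(1+o(1))n\log_{1/(1-p)} n$ edges in total.

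For the \textbf{lower bound}, the starting observation is that any $K_s$-saturated $H\subs G(n,p)$ automatically satisfies the weaker property that every $uv\in G(n,p)\setminus H$ has at least one common $H$-neighbor (because adding $uv$ must create a $K_s$, which contains a triangle). This makes the lower bound insensitive to $s$, matching the theorem. For a fixed graph $H$ on $[n]$, let $B(H)$ denote the set of non-$H$ pairs $\{u,v\}$ with $N_H(u)\cap N_H(v)=\emptyset$. Then the event that $H\subs G(n,p)$ and every non-$H$ pair of $G(n,p)$ has a common $H$-neighbor has probability exactly $p^{|E(H)|}(1-p)^{|B(H)|}$; I would union-bound this quantity over all $H$ with $|E(H)|\leq(1-\eps)n\log_{1/(1-p)} n$ and show the total is $o(1)$.

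The \textbf{main obstacle} is controlling $|B(H)|$ from below in a usable way. The naive estimate $|B(H)|\geq\binom{n}{2}-|E(H)|-\sum_v\binom{d_H(v)}{2}$ is too weak: the codegree sum can be as large as $(1+o(1))np\cdot 2|E(H)|$, since $G(n,p)$ has maximum degree $(1+o(1))np$ whp, rendering the bound useless near the target. Graphs with highly concentrated degree sequences (for instance, near-stars) have tiny $|B(H)|$ but impose rare events on $G(n,p)$ through the $p^{|E(H)|}$ factor and are few in number, so they must be handled as a separate case. My plan is to stratify candidate $H$'s by their degree profile and treat each stratum individually, balancing the count of $H$'s against $p^{|E(H)|}(1-p)^{|B(H)|}$. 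A dual local viewpoint is that for each $v$, $N_{G(n,p)}(v)\subs N_H(v)\cup\bigcup_{w\in N_H(v)}N_H(w)$, which in random $G(n,p)$ forces this "second neighborhood" to cover essentially all of $[n]$; carrying this through with the standard concentration tools for $G(n,p)$ should yield $d_H(v)\gtrsim\log_{1/(1-p)} n$ on average, giving the claimed lower bound after summing over $v$.
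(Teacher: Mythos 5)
Your proposal has genuine gaps in both directions. On the \textbf{upper bound}, two problems. First, the reduction for $s\ge 4$ does not work: running the construction only inside the common neighborhood of a fixed $(s-3)$-clique $C$ leaves every pair with an endpoint outside $N(C)$ unsaturated, and $|N(C)|\approx p^{s-3}n$, so $\Theta(n^2)$ pairs of $G(n,p)$ remain incomplete; extending to a maximal $K_s$-free supergraph could then cost far more than $o(n\log n)$ extra edges. The paper avoids this by making the hub set itself a $K_{s-1}$-free graph with no large $K_{s-2}$-free induced subgraph (Krivelevich's theorem), so that \emph{every} pair with a large common neighborhood in the hub is completed. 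Second, even for $s=3$ your key estimate is asserted, not proved, and as stated it does not parse (geometric decay in $i$ at rate $1-p$ would sum to $O(1)$ edges). The substantive worry is the constant: when $v_i$ builds its neighborhood greedily, each vertex $w$ it picks eliminates only the candidates in $N_H(w)$, and in your process most potential hub vertices have $N_H(w)$ strictly smaller than $N_G(w)$ (e.g.\ $v_2$ typically gets $H$-degree about $p(1-p)n$). So each pick removes less than a $p$-fraction of the remaining candidates, and the greedy may need more than $\log_{1/(1-p)}n$ picks per vertex, losing a constant factor. The paper's construction is engineered precisely so that all $A_1$--$B_1$ edges of $G$ are kept, making every hub vertex a ``full-efficiency'' dominator; you would need to prove your process does not degrade below this.

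On the \textbf{lower bound}, you have correctly identified the obstacle but not overcome it: the union bound over all candidate $H$ with few edges involves $e^{\Theta(n\log^2 n)}$ graphs, and graphs with concentrated degrees can have $|B(H)|$ too small for $(1-p)^{|B(H)|}$ to compensate; ``stratify by degree profile and balance'' is a plan, not an argument. The paper sidesteps the enumeration of $H$ entirely. It first splits the vertices of $H$ into a set $A$ of $H$-degree at least $\log_\alpha^2 n$ (which may be assumed to have size $O(n/\log n)$, else we are done) and the rest $B$, and observes deterministically that for $v\in B$ an incomplete $G$-edge $uv$ must get its common neighbor from $N_H(v)\cap A$ for all but $\log_\alpha^4 n$ choices of $u$. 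It then needs only a property of $G(n,p)$ alone: no set $Q$ of size below $\log_\alpha n - 5\log_\alpha\log_\alpha n$ can $G$-dominate all but $2\log_\alpha^4 n$ of any vertex's $G$-neighborhood. This is a union bound over merely $n^{O(\log n)}$ pairs $(x,Q)$, against failure probabilities $e^{-\Omega(\log^5 n)}$. Your ``dual local viewpoint'' is the right germ of this idea, but the decisive step --- reducing the quantification from all graphs $H$ to all small sets $Q$ via the $A$/$B$ split --- is missing, and without it the proof does not close.
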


\medskip

Our next result is about the closely related notion of weak saturation (also known as graph bootstrap percolation), introduced by Bollob\'as \cite{B68} in 1968. A graph $H\subs G$ is weakly $F$-saturated in $G$ if $H$ does not contain any copies of $F$, but the missing edges of $H$ in $G$ can be added back one-by-one in some order, such that every edge creates a new copy of $F$. The smallest number of edges in a weakly $F$-saturated graph in $G$ is denoted by $\wsat(G,F)$.

Clearly $\wsat(G,F)\le \sat(G,F)$, but Bollob\'as conjectured that when both $G$ and $F$ are complete, then in fact equality holds. This somewhat surprising fact was proved by Lov\'asz \cite{L77}, Frankl \cite{F82} and Kalai \cite{K84,K85} using linear algebra:

\begin{THM}[\cite{K84,K85}] \label{thm:weak_clique}
\[ \wsat(K_n, K_s) = \sat(K_n,K_s) = (s-2)n- \binom{s-1}{2} \]
\end{THM}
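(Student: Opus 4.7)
The upper bound follows from a single explicit construction. Let $G$ be the graph on $[n]$ in which the $s-2$ vertices $\{1,\dots,s-2\}$ form a clique and are joined to every other vertex, while $\{s-1,\dots,n\}$ is an independent set. Then
\[
|E(G)|=\binom{s-2}{2}+(s-2)(n-s+2)=(s-2)n-\binom{s-1}{2},
\]
and every missing edge has both endpoints in the independent set, so together with $\{1,\dots,s-2\}$ it completes a $K_s$. Hence $G$ is $K_s$-saturated, and in particular weakly $K_s$-saturated, giving $\wsat(K_n,K_s)\le\sat(K_n,K_s)\le(s-2)n-\binom{s-1}{2}$.

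For the matching lower bound $\wsat(K_n,K_s)\ge(s-2)n-\binom{s-1}{2}$, I would use the linear algebra method initiated by Lov\'asz and refined by Frankl and Kalai. The plan is to associate to each edge $e\in\binom{[n]}{2}$ a vector $f_e$ in some vector space $W$ so that:
\begin{enumerate}
\item[(i)] \emph{Local dependency}: for every $s$-set $S\subseteq[n]$ and every $e\in\binom{S}{2}$, the vector $f_e$ lies in the linear span of $\{f_{e'}:e'\in\binom{S}{2}\setminus\{e\}\}$;
\item[(ii)] \emph{Correct total rank}: $\mathrm{rank}\{f_e:e\in\binom{[n]}{2}\}=(s-2)n-\binom{s-1}{2}$.
\end{enumerate}
Given (i)--(ii), the lower bound follows quickly: if $H$ is weakly $K_s$-saturated in $K_n$, we add the missing edges in some order $e_1,e_2,\dots$ so that each $e_k$ completes a $K_s$ using only edges of $E(H)\cup\{e_1,\dots,e_{k-1}\}$. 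By (i), $f_{e_k}$ already lies in the span of the previously present $f$'s, so inductively $\mathrm{span}\{f_e:e\in E(H)\}=\mathrm{span}\{f_e:e\in\binom{[n]}{2}\}$. This span has dimension $(s-2)n-\binom{s-1}{2}$ by (ii), forcing $|E(H)|\ge(s-2)n-\binom{s-1}{2}$.

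The main obstacle is producing explicit vectors $f_e$ that simultaneously enforce (i) and achieve the precise rank in (ii). A standard recipe is to choose generic vectors $v_1,\dots,v_n\in\mathbb{R}^{s-2}$ (say on the moment curve, so that any $s-2$ of them are linearly independent) and to build $f_e$ from $v_i,v_j$ via an exterior or symmetric algebra construction. The local relations in (i) then come from a Pl\"ucker-type identity reflecting that any $s-1$ vectors in $\mathbb{R}^{s-2}$ are linearly dependent, which forces a non-trivial linear relation among the $\binom{s}{2}$ edge-vectors of any $K_s$. Verifying the exact rank in (ii) --- which must match the construction used for the upper bound --- is the most delicate step and relies crucially on the genericity of the $v_i$.
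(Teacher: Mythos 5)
This theorem is quoted from Lov\'asz, Frankl and Kalai; the paper offers no proof of it, so your proposal can only be measured against the classical argument. Your upper bound is complete and correct, and it is exactly the standard construction (also the one the paper alludes to in its introduction): $s-2$ universal vertices plus an independent set. Your reduction of the lower bound to properties (i) and (ii) is also sound --- if every edge of every $K_s$ lies in the span of the other edges of that $K_s$, then the span of $\{f_e : e\in E(H)\}$ is invariant under the edge-addition process, so it must equal the full span, and $|E(H)|$ is at least the total rank. This is precisely the scheme all the classical proofs follow.

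The gap is that the vectors $f_e$ are never constructed, and that construction is where the entire content of the theorem lives; as written, the lower bound is a plan rather than a proof. Worse, the specific recipe you gesture at cannot work as stated: if $v_1,\dots,v_n$ lie in $\mathbb{R}^{s-2}$ and $f_e$ is built from $v_i,v_j$ by a wedge or symmetric product, then all the $f_e$ live in a space of dimension bounded by a function of $s$ alone (e.g.\ $\dim\bigl(\wedge^2\mathbb{R}^{s-2}\bigr)=\binom{s-2}{2}$), whereas property (ii) demands rank $(s-2)n-\binom{s-1}{2}=\Theta(n)$. The target space must itself grow with $n$. The standard repair is Kalai's hyperconnectivity matroid: take generic $v_1,\dots,v_n\in\mathbb{R}^{s-2}$ and let $f_{ij}\in(\mathbb{R}^{s-2})^n$ have $i$-th block $v_j$, $j$-th block $-v_i$, and zero elsewhere. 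For an $s$-set $S$ the vectors $\{f_e : e\subseteq S\}$ then have rank exactly $\binom{s}{2}-1$ (one fewer than their number), giving (i), and a genericity count gives the global rank $(s-2)n-\binom{s-1}{2}$ for (ii); alternatively one can run the dual argument with $f_{ij}=v_i\wedge v_j$ for generic $v_i\in\mathbb{R}^{n-s+2}$, showing the \emph{non}-edges of $H$ carry independent vectors. Either way, these rank verifications are genuine work, not a routine consequence of genericity, and they are missing from your write-up.
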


Again, many variants of this problem with different host graphs have been studied \cite{A85,MS,MNS}, and it turns out to be quite interesting for random graphs, as well. In this case we are able to determine the weak saturation number exactly. It is worth pointing out that this number is linear in $n$, as opposed to the saturation number, which is of the order $n\log n$.

\begin{THM} \label{thm:weaksat}
Let $0<p<1$ be some constant probability and $s\ge 3$ be an integer. Then
\[ \wsat(G(n,p),K_s) = (s-2)n- \binom{s-1}{2}  \]
with high probability.
\end{THM}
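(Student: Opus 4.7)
The plan is to prove matching lower and upper bounds of $(s-2)n - \binom{s-1}{2}$.

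For the \textbf{lower bound}, I would show that, with high probability, every weakly $K_s$-saturated subgraph $H$ of $G := G(n,p)$ is also weakly $K_s$-saturated in $K_n$; the result then follows from Theorem~\ref{thm:weak_clique}. Given the saturation ordering of $E(G)\setminus E(H)$, I would append the edges of $E(K_n)\setminus E(G)$ in an arbitrary order. To see each appended edge $uv$ creates a new $K_s$, note that the current graph already contains all of $G$, so it suffices that $u$ and $v$ have $s-2$ common $G$-neighbors forming a $K_{s-2}$ in $G$. The expected number of such $K_{s-2}$'s per non-edge $\{u,v\}$ is $\Theta(n^{s-2})$, and a routine concentration argument (for instance, Janson's inequality) plus a union bound over $\binom{n}{2}$ pairs yields the desired property w.h.p.

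For the \textbf{upper bound}, I would construct an $H\subseteq G$ with exactly $(s-2)n-\binom{s-1}{2}$ edges that is weakly $K_s$-saturated. Mimicking the canonical construction on $K_n$, I would find an ordering $v_1,\ldots,v_n$ with $\{v_1,\ldots,v_{s-1}\}$ a clique in $G$ such that, for each $i\ge s$, there exist $s-2$ earlier vertices $v_{j_1(i)},\ldots,v_{j_{s-2}(i)}$ with $\{v_i,v_{j_1(i)},\ldots,v_{j_{s-2}(i)}\}$ inducing a $K_{s-1}$ in $G$. Such an ordering is built adaptively: start from any $K_{s-1}$ in $G$ and, at each stage, pick an unprocessed vertex whose $G$-neighborhood in the processed set contains a $K_{s-2}$ of $G$. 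The expected number of valid choices at stage $k$ equals $\binom{k}{s-2}(n-k)p^{\binom{s-1}{2}}=\Omega(n)$, so standard concentration arguments show the greedy succeeds w.h.p. Let $H$ be the $K_{s-1}$ on $\{v_1,\ldots,v_{s-1}\}$ together with the $s-2$ backedges per subsequent $v_i$; this gives exactly $\binom{s-1}{2}+(s-2)(n-s+1)=(s-2)n-\binom{s-1}{2}$ edges, and $H$ is $K_s$-free because the vertex of largest index in any $s$-subset contributes at most $s-2$ edges in $H$ to the others.

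To verify $H$ is weakly $K_s$-saturated, I would show its $K_s$-closure in $G$ equals $G$ by induction on the maximum index $i$ of the edges processed: after all $G$-edges with max index less than $i$ have been added, the closure restricted to $\{v_1,\ldots,v_{i-1}\}$ equals $G$ restricted there, and so $\{v_{j_1(i)},\ldots,v_{j_{s-2}(i)}\}$ is a $K_{s-2}$ in the closure, making $v_i$ the apex of a $K_{s-1}$. I would then grow $v_i$'s closure-neighborhood $A$ via an inner bootstrap, adding a vertex $v_j$ to $A$ (with the edge $v_iv_j$ entering the closure through a new $K_s$) whenever $A\cap N_G(v_j)$ contains a $K_{s-2}$ of $G$. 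The \textbf{main obstacle} is showing that this inner bootstrap fills all of $N_G(v_i)\cap\{v_1,\ldots,v_{i-1}\}$ w.h.p.\ for each $i$, after which a union bound over $i$ completes the argument. For $s=3$ this reduces to the easy fact that $G[N_G(v_i)]$ is connected w.h.p.; for $s\ge 4$ it requires an iterative Chernoff-type estimate inside the dense random graph $G[N_G(v_i)]\sim G(\Theta(n),p)$: once $|A|$ exceeds a constant threshold, the number of $K_{s-2}$'s in $G[A]$ grows polynomially, so every remaining vertex becomes addable with overwhelming probability.
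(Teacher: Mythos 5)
Your lower bound is essentially the paper's: one shows $G$ itself is (even strongly) $K_s$-saturated in $K_n$ because every pair of vertices has a common neighborhood containing a $K_{s-2}$, and then invokes Theorem~\ref{thm:weak_clique}. That part is fine. The upper bound, however, has two genuine gaps. First, your induction processes edges by maximum index $i$ and asks the inner bootstrap to recover \emph{all} of $N_G(v_i)\cap\{v_1,\dots,v_{i-1}\}$ at step $i$. For small $i$ (say $i=O(1)$ or even $i=o(n)$) the pool of earlier vertices is too small: the set $A$ cannot grow to a size where a stuck neighbor $v_j$ has any chance of seeing a $K_{s-2}$ inside $A\cap N_G(v_j)$, and with constant probability some edge $v_iv_j$ is simply not addable at that stage. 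The paper avoids this by seeding the construction with a \emph{linear-sized} base: a fixed $(s-2)$-clique $C$ together with its entire common neighborhood $V'$ and all $C$--$V'$ edges (still only $s-2$ edges per vertex), so that every later vertex $v_i$ attaches to a $C_i\subseteq V'$ and the closure argument always operates inside a set of size $\Omega(n)$.

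Second, the claim that ``once $|A|$ exceeds a constant threshold \ldots every remaining vertex becomes addable with overwhelming probability'' is not correct as stated: with $|A|=O(1)$ a given vertex is adjacent to any fixed $K_{s-2}$ of $A$ only with constant probability, so a constant fraction of neighbors gets stuck; one needs $|A|=\Omega(n)$. Even granting that, $A$ is produced adaptively by the bootstrap, so you cannot apply a Chernoff bound to it as if it were a fixed set; and a union bound over all $2^n$ candidate sets $A$ costs a factor $2^n$ against a per-vertex failure probability of only $e^{-c(p,s)n}$, where $c(p,s)$ can be far below $\log 2$ for small $p$ or large $s$. This is precisely the difficulty the paper's pseudorandomness properties are designed around: P2 is deliberately weak (it only guarantees \emph{one} usable vertex in $T$ for each pair of large sets $S,T$, which makes the failure probability $e^{-\Omega(n^2)}$ and beats the $2^{2n}$ union bound), and the slack is then recovered deterministically by combining P2 with the $\gamma n$ \emph{disjoint} clique extensions of P1: at most $\gamma n/2$ edges can ever be stuck, and at most $\gamma n/2$ of the disjoint extensions of $C\cup\{v,v_i\}$ can touch a stuck edge, so one extension always certifies the next addition. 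To repair your proof you would need either this kind of ``establish uniform deterministic properties first, then close deterministically'' structure, or a careful two-round edge-exposure argument (first round: the $\Theta(n)$ common neighbors of $C_i\cup\{v_i\}$, whose defining edges are disjoint from those used in the second round) together with the linear-sized seed described above.
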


\medskip

We will prove Theorem~\ref{thm:main} in Section~\ref{sec:strongsat} and Theorem~\ref{thm:weaksat} in Section~\ref{sec:weaksat}. We finish the paper with a discussion of open problems in Section~\ref{sec:last}. 

\textbf{Notation.} All our results are about $n$ tending to infinity, so we often tacitly assume that $n$ is large enough. We say that some property holds {\em with high probability}, or {\em whp}, if the probability tends to 1 as $n$ tends to infinity. In this paper $\log$ stands for the natural logarithm unless specified otherwise in the subscript. For clarity of presentation, we omit floor and ceiling signs whenever they are not essential. We use the standard notations of $G[S]$ for the subgraph of $G$ induced by the vertex set $S$, and $G[S,T]$ for the (bipartite) subgraph of $G[S\cup T]$ containing the $S$-$T$ edges of $G$. For sets $A,B$ and element $x$, we will sometimes write $A+x$ for $A\cup \{x\}$ and $A-B$ for $A\setminus B$.

\section{Strong saturation} \label{sec:strongsat}

In this section we prove Theorem~\ref{thm:main} about $K_s$-saturation in random graphs.

Let us say that a graph $H$ completes a vertex pair $\{u,v\}$ if adding the edge $uv$ to $H$ creates a new copy of $K_s$. Using this terminology, a $K_s$-free subgraph $H\subs G$ is $K_s$-saturated in $G$, if and only if $H$ completes all edges of $G$ missing from $H$.

\medskip
We will make use of the following bounds on the tail of the binomial distribution.
\begin{CLAIM} \label{lem:chern}
Let $0<p<1$ be a constant and $X\sim \Bin (n,p)$ be a binomial random variable. Then, for sufficiently large $n$,
\begin{enumerate}
  \item $\Prb[X \ge np+a] \le e^{-\frac{a^2}{2(np+a/3)}}$,
  \item $\Prb[X \le np-a] \le e^{-\frac{a^2}{2np}}$ and
  \item $\Prb\left[X \le \frac{n}{\log^2 n}\right] \le (1-p)^{n-\frac{n}{\log n}}$.
\end{enumerate}
\end{CLAIM}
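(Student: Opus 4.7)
The first two estimates are standard Chernoff-type tail bounds for the binomial distribution, so the plan is either to quote them from a standard source (for example Alon--Spencer) or to derive them in the usual way: apply Markov's inequality to $e^{tX}$, use the identity $\Exp[e^{tX}]=(1-p+pe^t)^n$, and optimize over $t>0$ for the upper tail (part 1) and $t<0$ for the lower tail (part 2). Both of these are entirely routine.

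The third bound is the only one that requires thought. A direct application of part~2 with $a=np-n/\log^2 n$ yields only $\exp(-\Omega(np))$, which is of order $e^{-\Theta(pn)}\approx(1-p)^n$ when $p$ is small, and so does not leave enough slack to account for the extra factor $(1-p)^{-n/\log n}$ on the right-hand side. My plan is therefore to use a direct combinatorial union bound instead: the event $\{X\le n/\log^2 n\}$ forces at least $n - n/\log^2 n$ of the $n$ independent trials to fail, so summing the probability $(1-p)^{n-n/\log^2 n}$ over all possible choices of the failing indices gives
\[ \Prb\!\left[X \le \tfrac{n}{\log^2 n}\right] \;\le\; \binom{n}{n/\log^2 n}\,(1-p)^{\,n-n/\log^2 n}. \]

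The remaining task is to compare this to the desired $(1-p)^{\,n-n/\log n}$. Using $\binom{n}{k}\le(en/k)^k$ with $k=n/\log^2 n$ and taking logarithms, the required inequality reduces to
\[ \tfrac{n}{\log^2 n}\bigl(1+2\log\log n\bigr) \;\le\; \left(\tfrac{n}{\log n}-\tfrac{n}{\log^2 n}\right)\log\!\tfrac{1}{1-p}, \]
which holds for all sufficiently large $n$, since the left side is $O(n\log\log n/\log^2 n)$ while the right side is $\Theta(n/\log n)$. I do not anticipate any genuine obstacle; the only subtlety is recognizing that part~2 cannot be plugged in as a black box for part~3 and that one has to use the union-bound estimate on $\binom{n}{k}(1-p)^{n-k}$ instead.
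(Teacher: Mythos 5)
Your proposal is correct and follows essentially the same route as the paper: the first two parts are quoted as standard Chernoff bounds, and the third is handled by a union bound of the form $\binom{n}{n/\log^2 n}(1-p)^{n-n/\log^2 n}$, with $\binom{n}{k}\le(en/k)^k$ absorbing the binomial coefficient into the slack between $n/\log^2 n$ and $n/\log n$. Your observation that part~2 cannot be used as a black box here is accurate and worth making, even though the paper does not comment on it.
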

\begin{proof}
The first two statements are standard Chernoff-type bounds (see e.g. \cite{JLRBOOK}). The third can be proved using a straightforward union-bound argument as follows.

Let us think about $X$ as the cardinality of a random subset $A\subs [n]$, where every element in $[n]$ is included in $A$ with probability $p$, independently of the others. $X\le \frac{n}{\log^2 n}$ means that $A$ is a subset of some set $I\subs[n]$ of size $\frac{n}{\log^2 n}$. For a fixed $I$, the probability that $X\subs I$ is $(1-p)^{n-|I|}$. We can choose an $I$ of size $\frac{n}{\log^2 n}$ in
\[ \binom{n}{n/\log^2 n} \le \left(\frac{en}{n/\log^2 n}\right)^{\frac{n}{\log^2 n}} \le (e\log^2 n)^{\frac{n}{\log^2 n}} \le e^{\frac{3n\log\log n}{\log^2 n}}\]
different ways, so 
\[ \Prb\left[X \le \frac{n}{\log^2 n}\right] \le e^{\frac{3n\log\log n}{\log^2 n}} \cdot (1-p)^{n-\frac{n}{\log^2 n}} \le (1-p)^{n-\frac{n}{\log n}}. \]

\end{proof}

\subsection{Lower bound}

First, we prove that any $K_s$-saturated graph in $G(n,p)$ contains at least $(1+o(1))n\log_{1/(1-p)} n$ edges. In fact, our proof does not use the property that $K_s$-saturated graphs are $K_s$-free, only that adding any missing edge from the host graph creates a new copy of $K_s$. Now if such an edge creates a new $K_s$ then it will of course create a new $K_3$, as well, so it is enough to show that our lower bound holds for triangle-saturation.

\begin{THM} \label{thm:main_lower}
Let $0<p<1$ be a constant. Then with high probability, $G=G(n,p)$ satisfies the following. If $H$ is a subgraph of $G$ such that for any edge $e\in G$ missing from $H$, adding $e$ to $H$ creates a new triangle, then $H$ contains at least $n\log_{1/(1-p)} n- 6n\log_{1/(1-p)} \log_{1/(1-p)} n$ edges.
\end{THM}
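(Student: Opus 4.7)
The plan is to reduce the theorem to a uniform concentration property of the random graph $G = G(n,p)$, combined with an aggregation over vertices.

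First, I would prove the following concentration lemma: whp in $G$, for every vertex $v \in V$ and every set $S \subseteq V \setminus \{v\}$ with $|S| \le d_0 := \log_{1/(1-p)} n - 6\log_{1/(1-p)}\log_{1/(1-p)} n$, the number of vertices $u \in N_G(v) \setminus S$ with no $G$-neighbor in $S$ is at least one (in fact $\Omega(p\log^6 n)$). For a fixed pair $(v,S)$, this count is $\Bin(|N_G(v)|-|S|, (1-p)^{|S|})$. A short calculation using the definition of $d_0$ gives $(1-p)^{d_0} = \Theta((\log_{1/(1-p)} n)^6 / n)$, so the mean is $\Omega(p\log^6 n)$. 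Claim~\ref{lem:chern} then yields per-pair failure probability $e^{-\Omega(\log^6 n)}$, which comfortably beats the union bound $n\binom{n}{d_0} \le e^{O(\log^2 n)}$ over all relevant pairs. The total number of $G$-neighbors of $v$ being concentrated around $pn$ whp is handled in parallel by parts (1)--(2) of Claim~\ref{lem:chern}.

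Next, I would apply this lemma to each vertex with $S := N_H(v) \subseteq N_G(v)$. The hypothesis on $H$ says that every $u \in N_G(v) \setminus N_H(v)$ has an $H$-neighbor, and hence a $G$-neighbor, in $N_H(v)$; so the "bad $u$" guaranteed by the lemma cannot exist. Therefore $|N_H(v)| > d_0$ for every vertex $v$. Summing the degrees gives the per-vertex bound $2|E(H)| = \sum_v d_v \ge n d_0$, that is, $|E(H)| \ge n d_0 / 2$.

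The hardest step is closing the factor-of-two gap between this intermediate bound and the stated target $|E(H)| \ge n d_0$. The per-vertex concentration only rules out $d_v \le d_0$ (expected one bad $u$), not $d_v \le 2 d_0$, so the final factor must come from global structure. I would close the gap by combining the per-vertex bound with the cherry inequality $\sum_v \binom{d_v}{2} \ge |E(G)| - |E(H)|$, which holds because each missing $G$-edge is witnessed by an $H$-cherry and rearranges to $\sum_v d_v^2 \ge pn^2$. Together with $d_v > d_0$ for all $v$, this inequality forces a nontrivial set of "hub" vertices with degree much larger than $d_0$. An alternative refinement is a greedy iteration: pick $v_1, v_2, \ldots$ from the set of vertices not yet $H$-adjacent to $\bigcup_{j<i} N_H(v_j)$, argue (via the concentration lemma again) that this uncovered set shrinks by a factor of $(1-p)$ at each step until it has size below $\polylog n$ after $\approx \log_{1/(1-p)} n$ steps, and then sum the contributions of the disjoint $N_H(v_i)$'s together with the per-vertex bound on the remaining vertices. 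Either route should yield the missing factor of two.
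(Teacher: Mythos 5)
Your first two steps are sound and essentially coincide with the paper's Claim~\ref{lem:lowerlem}: the uniform statement over all pairs $(v,S)$ with $|S|\le d_0$ survives the union bound exactly as you compute, and applying it with $S=N_H(v)$ correctly forces $d_H(v)>d_0$ for every vertex. The genuine gap is the factor of $2$, which you correctly flag as the hardest step but do not actually close; neither of your proposed patches works. The cherry inequality $\sum_v\binom{d_v}{2}\ge |E(G)|-|E(H)|=\Theta(n^2)$ is far too weak to produce the needed ``hubs'': it is already satisfied by adding $O(1)$ vertices of $H$-degree $\Theta(n)$ to an otherwise $d_0$-regular-ish graph, and those vertices increase $\sum_v d_v$ by only $O(n)=o(nd_0)$. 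Thus a degree sequence with $d_v=d_0+O(1)$ for all but $O(1)$ vertices is consistent with both your per-vertex bound and the cherry inequality, yet has $|E(H)|=(1+o(1))nd_0/2$. The greedy alternative fares no better: vertices $v_1,v_2,\dots$ with pairwise disjoint $H$-neighborhoods of size $\ge d_0$ contribute only $O(\log n)\cdot d_0=O(\log^2 n)$ edges over the $O(\log n)$ rounds you describe, after which you are back to the halved per-vertex count; moreover the ``shrinks by a factor of $(1-p)$'' heuristic belongs to the upper-bound construction, where you choose the sets and the randomness of $G$ helps you --- here $H$ is adversarial and may be chosen after $G$ is revealed.

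The paper closes the gap with a high-/low-degree dichotomy that makes the final count bipartite and hence free of double counting. Let $\alpha=1/(1-p)$, let $A$ be the set of vertices of $H$-degree at least $\log_\alpha^2 n$, and let $B=[n]-A$. If $|A|\ge 2n/\log_\alpha n$, then already $|E(H)|\ge\frac12|A|\log_\alpha^2 n\ge n\log_\alpha n$. Otherwise $|B|=n(1-o(1))$, and one upgrades the per-vertex argument to show that every $v\in B$ has at least $d_0$ $H$-neighbors \emph{inside $A$}: for all but at most $\log_\alpha^4 n$ choices of $u$ with $uv\in G\setminus H$, the common $H$-neighbor $w$ witnessing the triangle must lie in $A$ (if $w\in B$, then both $v$ and $w$ have $H$-degree at most $\log_\alpha^2 n$, leaving at most $\log_\alpha^4 n$ choices of $u$ reachable this way), so $N_H(v)\cap A$ must $G$-dominate all but $\log_\alpha^4 n$ of $v$'s $G$-neighbors. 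Your concentration lemma --- strengthened to produce $2\log_\alpha^4 n$ undominated neighbors, which your $\Omega(p\log^6 n)$ expectation already supports --- then gives $|N_H(v)\cap A|\ge d_0$. Summing over $v\in B$ now counts each $A$--$B$ edge exactly once, yielding $|B|\,d_0\ge n\log_\alpha n-6n\log_\alpha\log_\alpha n$ with no factor of $2$. So your lemma is the right tool, but the missing idea is this split into $A$ and $B$.
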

\begin{proof}
Let $H$ be such a subgraph and set $\alpha=\frac{1}{1-p}$. Let $A$ be the set of vertices that are incident to at least $\loga^2 n$ edges in $H$, and let $B=[n]-A$ be the rest. If $|A|\ge \frac{2n}{\loga n}$ then $H$ contains at least $\frac{1}{2}|A|\loga^2n\ge n\loga n$ edges and we are done. So we may assume $|A|\le \frac{2n}{\loga n}$ and hence $|B|\ge n(1-\frac{2}{\loga n})$. Our aim is to show that whp every vertex in $B$ is adjacent to at least $\loga n- 5\loga \loga n$ vertices of $A$ in $H$. This would imply that $H$ contains at least $|B|(\loga n- 5\loga \loga n)\ge n(\loga n- 6\loga \loga n)$ edges, as needed.

So pick a vertex $v\in B$ and let $N$ be its neighborhood in $A$ in the graph $H$. Let $uv$ be an edge of the random graph $G$ missing from $H$. Since $H$ is $K_3$-saturated, we know that $u$ and $v$ must have a common neighbor $w$ in $H$. Notice that for all but at most $\loga^4 n$ choices of $u$, this $w$ must lie in $A$. Indeed, $v$ is in $B$, so there are at most $\loga^2 n$ choices for $w$ to be a neighbor of $v$, and if this $w$ is also in $B$, then there are only $\loga^2 n$ options for $u$, as well. So the neighbors of $N$ in $H\subs G$ must contain all but $\loga^4 n$ of the vertices (outside $N$) that are adjacent to $v$ in $G$. The following claim shows that this is only possible if $|N|\ge \loga n- 5\loga \loga n$, thus finishing our proof.
\end{proof}

\begin{CLAIM} \label{lem:lowerlem}
Let $0<p<1$ be a constant and $G=G(n,p)$. Then whp for any vertex $x$ and set $Q$ of size at most $\loga n - 5\loga \loga n$, there are at least $2\loga^4 n$ vertices in $G$ adjacent to $x$ but not adjacent to any of the vertices in $Q$.
\end{CLAIM}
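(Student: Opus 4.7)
The plan is a standard first-moment calculation followed by a Chernoff tail estimate and a union bound over all admissible pairs $(x, Q)$. Fix a vertex $x$ and a set $Q$ with $q := |Q| \le \loga n - 5\loga\loga n$, and let $X$ count the vertices $y \in [n]\setminus(\{x\}\cup Q)$ that are adjacent in $G$ to $x$ but to no vertex of $Q$. For distinct $y$, the edges $xy$ and $yu$ (with $u\in Q$) form disjoint sets, so the indicators are independent and $X \sim \Bin(N, P)$ with $N = n - q - 1$ and $P = p(1-p)^{q}$.

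The key estimate uses $\alpha = 1/(1-p)$:
\[
(1-p)^q \;=\; \alpha^{-q} \;\ge\; \alpha^{-\loga n + 5\loga\loga n} \;=\; \frac{\loga^5 n}{n},
\]
so $\Exp[X] = NP \ge (1-o(1))\,p\,\loga^5 n$. This is larger than the target $2\loga^4 n$ by a factor of order $\loga n$. Applying part~(2) of Claim~\ref{lem:chern} with deviation $a = \Exp[X]/2$ then yields
\[
\Prb\!\left[X < 2\loga^4 n\right] \;\le\; \Prb\!\left[X \le \Exp[X]/2\right] \;\le\; \exp\!\bigl(-\Theta(\loga^5 n)\bigr).
\]

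Finally, I would union bound over the choices of $(x,Q)$. The number of such pairs is at most
\[
n \cdot \sum_{q \le \loga n} \binom{n}{q} \;\le\; n \cdot \loga n \cdot \binom{n}{\loga n} \;\le\; e^{O(\log^2 n)},
\]
using $\binom{n}{\loga n} \le (en/\loga n)^{\loga n}$. Since $\loga^5 n = \Theta(\log^5 n)$, the Chernoff saving $\exp(-\Theta(\log^5 n))$ comfortably dominates $e^{O(\log^2 n)}$, so the overall failure probability tends to $0$. The main point — and the reason for the specific slack $5\loga\loga n$ in the statement — is precisely this balance: the slack inflates $\Exp[X]$ by the polylogarithmic factor $\loga^5 n$, which is what provides enough Chernoff concentration to absorb the $\Theta(\log^2 n)$ bits of entropy coming from the choice of $Q$. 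Everything else is routine; the computation is easy once the exponents are tracked carefully through $\alpha = 1/(1-p)$.
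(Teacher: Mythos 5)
Your proposal is correct and follows essentially the same route as the paper: compute $\Exp[X]\ge (1-o(1))p\loga^5 n$ from the slack $5\loga\loga n$, apply the Chernoff bound from Claim~\ref{lem:chern} to get failure probability $e^{-\Omega(\log^5 n)}$ for a fixed pair $(x,Q)$, and absorb the $e^{O(\log^2 n)}$ union bound over all pairs. The exponent bookkeeping and the explanation of why the $\loga^5 n$ inflation is exactly what beats the entropy of choosing $Q$ match the paper's argument.
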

\begin{proof}
Fix $x$ and $Q$. Then the probability that some other vertex $y$ is adjacent to $x$ but not to any of $Q$ is $p'=p(1-p)^{|Q|}\ge p\frac{\loga^5 n}{n}$. These events are independent for the different $y$'s, so the number of vertices satisfying this property is distributed as $\Bin(n-1-|Q|,p')$. Its expectation, $p'(n-1-|Q|)$ is at least $\frac{p}{2}\loga^5 n$, so the probability that there are fewer than $2\loga^4 n$ such vertices is, by Claim~\ref{lem:chern}, at most $e^{-\Omega(\log^5 n)}$. But there are only $n$ ways to choose $x$ and $\sum_{i=1}^{\loga n} \binom{n}{i}\le n\cdot n^{\loga n}$ ways to choose $Q$, so the probability that for some $x$ and $Q$ the claim fails is
\[ n^2\cdot n^{\loga n} \cdot e^{-\Omega(\log^5 n)} \le \exp( O(\log^2 n)- \Omega(\log^5 n)) =o(1).  \]
\end{proof}

\subsection{Upper bound}

Next, we construct a saturated subgraph of the random graph that contains $(1+o(1))n\log_{1/(1-p)} n$ edges. The following observation says that it is enough to find a graph that is saturated at almost all the edges.

\begin{OBS} \label{obs:almost}
It is enough to find a $K_s$-free graph $G_0$ that completes all but at most $o(n\log n)$ missing edges. A maximal $K_s$-free supergraph $G\supseteq G_0$ will then be $K_s$-saturated with an asymptotically equal number of edges.
\end{OBS}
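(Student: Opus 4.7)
The plan is a short monotonicity argument. The observation is essentially a formal reduction: it lets the rest of the section focus on \emph{nearly} saturating $G(n,p)$ rather than worry about perfect saturation on the last handful of missing edges. First I would set $U$ to be the set of edges $e \in E(G(n,p)) \setminus E(G_0)$ for which $G_0 + e$ contains no new copy of $K_s$; by hypothesis $|U| = o(n \log n)$.

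Next I would extend $G_0$ greedily: iterate over the edges of $G(n,p)$ not in $G_0$ in arbitrary order, adding each one whose addition does not create a copy of $K_s$ in the current graph. This process terminates in a maximal $K_s$-free subgraph $G$ with $G_0 \subseteq G \subseteq G(n,p)$, which is by definition $K_s$-saturated in $G(n,p)$. What remains is to compare $|E(G)|$ with $|E(G_0)|$.

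The main (and essentially only) point is that every edge added during the greedy extension must already lie in $U$. Indeed, if $e \in E(G) \setminus E(G_0)$, then $G$ contains no $K_s$ using $e$, because $G$ is $K_s$-free while $e \in E(G)$. Since $G_0 + e \subseteq G$, the same holds for $G_0 + e$, so $G_0 + e$ has no new $K_s$ beyond those already present in $G_0$; hence $e \in U$. Summing over added edges, $|E(G) \setminus E(G_0)| \le |U| = o(n \log n)$.

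Finally, since $\log$ and $\log_{1/(1-p)}$ differ only by a constant factor, $o(n \log n) = o(n \log_{1/(1-p)} n)$, so the additive discrepancy is absorbed into the $(1 + o(1))$ factor of the target edge count. Thus producing a $G_0$ of size $(1+o(1)) n \log_{1/(1-p)} n$ that fails to complete only $o(n \log n)$ of the missing edges of $G(n,p)$ is enough. There is no genuine obstacle in the observation itself — it is a clean monotonicity reduction — and I expect the real work of the section to go into the subsequent explicit construction of such a $G_0$.
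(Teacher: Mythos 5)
Your argument is correct and is precisely the reasoning the paper leaves implicit in stating this as an Observation: any edge of the maximal $K_s$-free extension $G\setminus G_0$ must be one that $G_0$ failed to complete (else $G$ would contain a $K_s$), so at most $o(n\log n)=o(n\log_{1/(1-p)}n)$ edges are added. Nothing further is needed.
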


Before we give a detailed proof of the upper bound, let us sketch the main ideas for the case $s=3$. For simplicity, we will also assume $p=\frac{1}{2}$.

\medskip
As we mentioned in the introduction, if we fix a set $A_1$ of $\log_{4/3}\binom{n}{2}\approx 2\log_{4/3} n$ vertices with $B_1=[n]-A_1$, then $G[A_1,B_1]$ is a $K_3$-saturated subgraph with about $n\log_{4/3} n$ edges. But we can do better than that (see Figure~\ref{fig:strongsat}):

So instead, we fix $A_1$ to be a set of $2\log_2 n$ vertices and add all edges in $G[A_1,B_1]$ to our construction. This way we complete most of the edges in $B_1$ using about $n\log_2 n$ edges. Of course, we still have plenty of edges in $G[B_1]$ left incomplete, however, as we shall see, almost all of them are induced by a small set $B_2\subs B_1$ of size $o(n)$. But then we can complete all the edges in $B_2$ using only $o(n\log n)$ extra edges: just take an additional set $A_2$ of $\log_{4/3} \binom{n}{2}$ vertices, and add all edges in $G[A_2,B_2]$ to our construction.

This way, however, we still need to take care of the $\Theta(n\log n)$ incomplete edges between $A_2$ and $B_3=B_1-B_2$. As a side remark, let us point out that dropping the $K_3$-freeness condition from $K_3$-saturation would make our life easier here. Indeed, then we could have just chosen $A_2$ to be a subset of $B_2$.

The trick is to take yet another set $A_3$ of $o(\log n)$ vertices, and add all the $o(n\log n)$ edges of $G$ between $A_3$ and $A_2\cup B_3$ to our construction. Now the $o(\log n)$ vertices in $A_3$ are not enough to complete all the edges between $A_2$ and $B_3$, but if $|A_3|=\omega(1)$, then it will complete {\em most} of them. This gives us a triangle-free construction on $(1+o(1))2\log_2 n$ edges completing all but $o(n\log n)$ edges, and by the observation above this is enough.

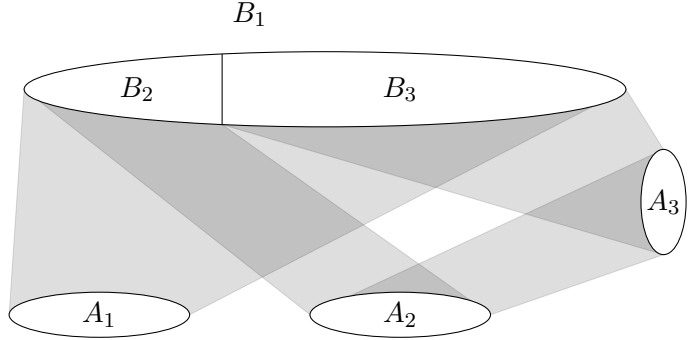
\begin{figure}[ht]
\begin{center}
\begin{tikzpicture}
\draw[fill, opacity=.13] (-1.8,-3)--(4,0)--(-4,0)--(-4.2,-3)--(-1.8,-3);
\draw[fill, opacity=.13] (2.2,-3)--(250:4 and .5)--(-4,0)--(-.2,-3)--(2.2,-3);
\draw[fill, opacity=.13] (4.5,-.8)--(4,0)--(250:4 and .5)--(4.5,-2.2)--(4.5,-.8);
\draw[fill, opacity=.13] (4.5,-.8)--(-.2,-3)--(2.2,-3)--(4.5,-2.2)--(4.5,-.8);

\draw[fill=white] (0,0) ellipse (4 and .5);
\node at (-1,1) {$B_1$};
\node at (-2.5,0) {$B_2$};
\node at (1,0) {$B_3$};
\draw[fill=white] (-3,-3) ellipse (1.2 and .3);
\node at (-3,-3) {$A_1$};
\draw[fill=white] (1,-3) ellipse (1.2 and .3);
\node at (1,-3) {$A_2$};
\draw[fill=white] (4.5,-1.5) ellipse (.3 and .7);
\node at (4.5,-1.5) {$A_3$};

\draw (110:4 and .5) -- (250:4 and .5);

\end{tikzpicture}
\caption{strong $K_3$-saturation}  \label{fig:strongsat}
\end{center}
\end{figure}
\medskip

Let us now collect the ingredients of the proof. The next lemma says that the graph comprised of the edges incident to a fixed set of $a$ vertices will complete all but an approximately $(1-p^2)^a$ fraction of the edges in any prescribed set $E$.

\begin{LEMMA} \label{lem:edgecover}
Let $s\ge 3$ be a fixed integer, and suppose $a=a(n)$ grows to infinity as $n\rightarrow\infty$. Let $A\subs [n]$ be a set of size $a$ and $E$ be a collection of vertex pairs from $B=[n]-A$. Now consider the random graph $G_A$ defined on $[n]$ as follows:
\begin{enumerate}
 \item $G_A[B]$ is empty,
 \item $G_A[A]$ is a fixed graph such that any induced subgraph on $\frac{a}{\log^2 a}$ vertices contains a copy of $K_{s-2}$,
 \item the edges between $A$ and $B$ are in $G_A$ independently with probability $p$.
\end{enumerate}
Then the expected number of pairs in $E$ that $G_A$ does not complete is at most $(1-p^2)^{a-\frac{a}{\log a}}\cdot |E|$, provided $n$ is sufficiently large.
\end{LEMMA}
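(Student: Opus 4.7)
The plan is to apply linearity of expectation to the number of uncompleted pairs, so it suffices to bound the probability that a single fixed pair $\{u,v\} \subseteq B$ fails to be completed by $G_A$. Since $G_A[B]$ is empty, every copy of $K_s$ using the putative new edge $uv$ must have its remaining $s-2$ vertices lying entirely inside $A$; moreover those vertices must form a $K_{s-2}$ in the fixed graph $G_A[A]$ and each must be adjacent in $G_A$ to both $u$ and $v$. Hence the pair $\{u,v\}$ is completed whenever the random set $C := N_{G_A}(u) \cap N_{G_A}(v) \cap A$ contains a $K_{s-2}$ of $G_A[A]$.

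The next step is to exploit the hypothesis on $G_A[A]$: any $a/\log^2 a$ vertices of $A$ induce a copy of $K_{s-2}$. So if $|C| \ge a/\log^2 a$, then any subset of $C$ of that size already contains the required $K_{s-2}$, and hence $G_A$ completes $\{u,v\}$. In other words, the bad event that $\{u,v\}$ is not completed is contained in $\{|C| < a/\log^2 a\}$.

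Now because the $A$--$B$ edges of $G_A$ are independent of each other (and of $G_A[A]$), each vertex of $A$ lies in $C$ independently with probability $p^2$, so $|C| \sim \Bin(a, p^2)$. Applying Claim~\ref{lem:chern}(3) with parameter $p^2 \in (0,1)$ in place of $p$ and with $a$ in place of $n$ (valid since $a \to \infty$), I get
\[ \Prb\!\left[|C| \le \frac{a}{\log^2 a}\right] \le (1-p^2)^{a - a/\log a}. \]
Thus each pair in $E$ fails to be completed with probability at most $(1-p^2)^{a-a/\log a}$, and linearity of expectation finishes the proof.

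There is no real obstacle here; the argument is essentially a union of (i) a structural reduction showing that completion reduces to the common $A$-neighborhood containing a $K_{s-2}$, and (ii) a tail bound on $\Bin(a,p^2)$. The only point to be mindful of is that Claim~\ref{lem:chern}(3) requires its parameter to be taken large enough, which is guaranteed by the assumption $a = a(n) \to \infty$; and that the condition on $G_A[A]$ is used only through the containment statement, so the case $s=3$ (where $K_{s-2} = K_1$) is automatic and needs no separate treatment.
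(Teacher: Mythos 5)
Your proof is correct and follows essentially the same route as the paper's: reduce completion of a pair $\{u,v\}$ to the common neighborhood in $A$ having size at least $a/\log^2 a$ (via the hypothesis on $G_A[A]$), note that this common neighborhood is $\Bin(a,p^2)$, and apply Claim~\ref{lem:chern}(3) together with linearity of expectation. No issues.
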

\begin{proof}
Suppose a pair $\{u,v\}\in E$ of vertices is incomplete, i.e., adding the edge $uv$ to $G_A$ does not create a $K_s$. Because of the second condition, the probability of this event can be bounded from above by the probability that $u$ and $v$ have fewer than $\frac{a}{\log^2 a}$ neighbors in $A$. As the size of the common neighborhood of $u$ and $v$ is distributed as $\Bin(a,p^2)$, Lemma~\ref{lem:chern} implies that this probability is at most $(1-p^2)^{a-\frac{a}{\log a}}$. This bound holds for any pair in $E$, so the expected number of bad pairs is indeed no greater than $(1-p^2)^{a-\frac{a}{\log a}}\cdot |E|$. 
\end{proof}

A $K_s$-saturated graph cannot contain any cliques of size $s$. So if we want to construct such a graph using Lemma~\ref{lem:edgecover}, we need to make sure that $G_A$ itself is $K_s$-free. The easiest way to do this is by requiring that $G_A[A]$ not contain any $K_{s-1}$. In our application, $G_A$ will be a subgraph of $G(n,p)$, so in particular, we need $G_A[A]$ to be a subgraph of an Erd\H{o}s-R\'enyi random graph that is $K_{s-1}$-free but induces no large $K_{s-2}$-free subgraph. Krivelevich \cite{K95} showed that whp we can find such a subgraph.

\begin{THM}[Krivelevich] \label{thm:kriv}
Let $s\ge 3$ be an integer, $p\ge cn^{-2/s}$ (for some small constant $c$ depending on $s$) and $G=G(n,p)$. Then whp $G$ contains a $K_{s-1}$-free subgraph $H$ that has no $K_{s-2}$-free induced subgraph on $n^{\frac{s-2}{s}}\polylog n\le \frac{n}{\log^3 n}$ vertices.
\end{THM}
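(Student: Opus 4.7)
The plan is to construct $H$ by an alteration argument on $G = G(n,p)$. I would enumerate the copies of $K_{s-1}$ in $G$, delete one edge from each (say the lexicographically smallest), and let $H$ be the resulting subgraph. By construction $H$ is $K_{s-1}$-free, so the substantive task is to verify the Ramsey--Tur\'an-like property that every induced subgraph of $H$ on $m := n^{(s-2)/s}\polylog n$ vertices still contains a copy of $K_{s-2}$.

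Two counting estimates underpin the argument. First, the expected number of $K_{s-1}$-copies in $G$ is of order $n^{s-1} p^{\binom{s-1}{2}}$ and is concentrated by Janson's inequality, which yields a whp upper bound on the number of deleted edges. Second, for a fixed set $S$ with $|S|=m$, the expected number of $K_{s-2}$-copies in $G[S]$ is of order $m^{s-2} p^{\binom{s-2}{2}}$; a direct computation with $p \ge cn^{-2/s}$ shows that our choice of $m$ makes this count grow fast enough to withstand the deletion, and Janson's inequality again provides concentration strong enough to support a union bound over all $\binom{n}{m}\le n^m$ choices of $S$.

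The main obstacle is connecting the two estimates: showing that not all $K_{s-2}$-copies in $G[S]$ are wiped out by the deletions. The cleanest approach restricts attention to \emph{isolated} copies, i.e., sets $T$ of size $s-2$ whose vertices share no common neighbor in $G$; such a $T$ lies in no $K_{s-1}$ and is therefore never touched by the alteration. When $np^{s-2}=o(1)$ a second-moment argument shows that a positive fraction of $K_{s-2}$-copies in $G[S]$ are isolated, and we are done. In the regime where $np^{s-2}$ may be large (small $s$), one must bound the number of deleted edges inside $G[S]$ directly and argue via a deletion-resilience estimate that a typical $S$ still retains at least one surviving $K_{s-2}$. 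The delicate part of the proof is the bookkeeping of polylog factors: they must be large enough for each bad event to have probability at most $n^{-\Omega(m)}$ (so the $\binom{n}{m}$ union bound goes through), but small enough that $m$ stays within the prescribed range $m \le n/\log^3 n$.
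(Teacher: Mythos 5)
This theorem is not proved in the paper at all --- it is quoted from Krivelevich's paper \cite{K95}, so the relevant comparison is with his argument. Your sketch has the right general flavor (alteration plus a union bound over $m$-sets), but it has two genuine gaps. First, the naive alteration at the stated edge probability does not work: the theorem is applied in this paper with $p$ \emph{constant}, and for $p\gg n^{-2/s}$ the number of copies of $K_{s-1}$ is $\Theta(n^{s-1}p^{\binom{s-1}{2}})$, which vastly exceeds the number of edges; indeed for constant $p$ and $s\ge 4$ almost every edge $uv$ is the lexicographically smallest edge of some $K_{s-1}$, so your graph $H$ would be nearly empty and would certainly contain huge $K_{s-2}$-free induced subgraphs. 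The missing step is a reduction to the threshold case: the property ``$G$ contains a $K_{s-1}$-free subgraph with no $K_{s-2}$-free induced subgraph on $m$ vertices'' is monotone increasing in $G$, so it suffices to prove the statement at $p=cn^{-2/s}$ with $c$ small (where the number of $K_{s-1}$'s is a small fraction of the number of edges) and then couple $G(n,cn^{-2/s})\subseteq G(n,p)$.

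Second, and more seriously, the step you describe as ``the main obstacle'' is in fact the entire content of Krivelevich's paper, and neither of your two routes closes it. Knowing that $G[S]$ contains many copies of $K_{s-2}$ does not preclude the deleted edges from covering all of them, so what is needed is a lower-tail bound on the \emph{minimum number of edges whose removal destroys every $K_{s-2}$ in $G[S]$}; this is not a consequence of Janson's inequality for the copy count, and proving such a bound with failure probability $e^{-\omega(m\log n)}$ (to survive the union bound over $\binom{n}{m}$ sets) is exactly the bespoke large-deviation inequality that is Krivelevich's main lemma --- you leave it as a black box under the name ``deletion-resilience estimate.'' Your alternative route via isolated copies also fails quantitatively: a second-moment (Chebyshev) argument gives polynomially small failure probability for a single $S$, nowhere near the $n^{-\omega(m)}$ needed for the union bound, and the cruder comparison ``every $S$ spans more $K_{s-2}$'s than $G$ has $K_{s-1}$'s'' is false at the threshold, since $\#K_{s-1}(G)\approx n^{2(s-1)/s}$ dominates $\#K_{s-2}(G[S])\approx n^{(s-2)/s}\polylog n$. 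As written, the proposal therefore does not constitute a proof.
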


%
We will also use the fact that random graphs typically have relatively small chromatic numbers (see e.g. \cite{JLRBOOK}):

\begin{CLAIM} \label{lem:chrom}
Let $0<p<1$ be a constant and $G=G(n,p)$. Then $\chi(G)=(1+o(1))\frac{n}{2\log_{1/(1-p)}n}$ whp.
\end{CLAIM}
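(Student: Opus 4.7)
Set $\alpha = \frac{1}{1-p}$ and $k_0 = 2\log_\alpha n$, so the target is $\chi(G) = (1+o(1)) n/k_0$. The plan is to establish matching lower and upper bounds separately. For the lower bound one controls the independence number from above, while for the upper bound one greedily peels off near-maximum independent sets.

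For the lower bound, a standard first-moment calculation shows that whp $\alpha(G) \le (2+o(1))\log_\alpha n$. Indeed, the expected number of independent sets of size $k = (2+\eps)\log_\alpha n$ is at most
\[
\binom{n}{k}(1-p)^{\binom{k}{2}} \le \frac{n^k}{k!}\, \alpha^{-\binom{k}{2}} = n^{k(1-(1+\eps/2)(1-1/k))+o(1)} \to 0,
\]
so Markov's inequality gives $\alpha(G) \le (2+\eps)\log_\alpha n$ whp. Since every color class in a proper coloring is an independent set, $\chi(G) \ge n/\alpha(G) \ge (1-o(1))\tfrac{n}{2\log_\alpha n}$.

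For the upper bound, the key ingredient is a concentration lemma: whp, every induced subgraph $G[S]$ with $|S| \ge n/\log^2 n$ contains an independent set of size at least $(2-o(1))\log_\alpha n$. Given this, one greedily repeats the following step: as long as the current set of uncolored vertices $V'$ satisfies $|V'| \ge n/\log^2 n$, locate an independent set of size $(2-o(1))\log_\alpha n$ in $G[V']$, assign it a fresh color, and remove it. This terminates after at most $\frac{n}{(2-o(1))\log_\alpha n} = (1+o(1))\frac{n}{2\log_\alpha n}$ steps, leaving at most $n/\log^2 n$ vertices to be colored individually using $o(n/\log n)$ additional colors. Adding up gives $\chi(G) \le (1+o(1))\frac{n}{2\log_\alpha n}$.

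The main obstacle is the concentration lemma: a direct application of Azuma–Hoeffding to the independence number $\alpha$ (which is $1$-Lipschitz under vertex exposure) produces deviations of order $\sqrt{m\log m}$, which is far too weak to survive a union bound over all $\binom{n}{m}$ subsets of size $m$. The usual workaround, carried out in detail in \cite{JLRBOOK}, is to apply Azuma instead to the packing number $Y(H)$ — the maximum number of pairwise vertex-disjoint independent sets of size $k = (2-\eps)\log_\alpha m$ in $H$ — which is also $1$-Lipschitz but has expectation close to $m/k$, making the resulting tail bound tight enough for the union bound to succeed. The existence of one such independent set in $G[S]$ then follows for every $|S| \ge n/\log^2 n$ simultaneously, completing the argument.
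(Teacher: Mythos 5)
First, a remark on scope: the paper does not actually prove this claim --- it quotes Bollob\'as's theorem on $\chi(G(n,p))$ with a pointer to \cite{JLRBOOK} --- so the only question is whether your argument is sound. Your architecture is the standard one: a first-moment bound showing $\alpha(G)\le (2+o(1))\log_{1/(1-p)}n$ whp (this part is correct), and for the upper bound a greedy extraction of independent sets of size $(2-o(1))\log_{1/(1-p)}n$ from every induced subgraph on at least $n/\log^2 n$ vertices, which indeed reduces everything to the concentration lemma you isolate. You also correctly diagnose why Azuma applied directly to $\alpha(G[S])$ is hopeless. The gap is in your proposed fix.

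With your notation, let $m=n/\log^2 n$ and $k=(2-o(1))\log_\alpha n=\Theta(\log n)$. If $Y$ is the maximum number of pairwise \emph{vertex-disjoint} independent $k$-sets in $G[S]$, then deterministically $Y\le m/k$, hence $\Exp[Y]\le m/k$. The vertex-exposure Azuma bound $\Prb[Y\le \Exp Y-t]\le e^{-t^2/(2m)}$ therefore gives at best $\Prb[Y=0]\le \exp(-m/(2k^2))=\exp(-\Theta(n/\log^4 n))$, while the union bound must absorb $\binom{n}{m}=\exp(\Theta(m\log\log n))=\exp(\Theta(n\log\log n/\log^2 n))$. Since $n/\log^4 n\ll n\log\log n/\log^2 n$, the union bound fails, and no choice of $m=o(n/\log n)$ rescues it: one always needs $\Exp[Y]\gg m\sqrt{\log(n/m)}$, whereas $\Exp[Y]\le m/k=o(m)$. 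The standard workaround (Bollob\'as's original argument, as presented in Alon--Spencer; \cite{JLRBOOK} instead uses the extended Janson inequality to get $\exp(-m^{2-o(1)})$) is to count pairwise \emph{edge-disjoint} independent $k$-sets: this $Y$ is $1$-Lipschitz under \emph{edge} exposure over $\binom{m}{2}$ steps and, crucially, a second-moment/deletion argument shows $\Exp[Y]=\Omega(m^2/k^4)\gg m$, so Azuma yields $\Prb[Y=0]\le\exp(-\Omega(m^2/k^8))$, which comfortably beats $\binom{n}{m}$. The missing idea is therefore that the packing must be taken with respect to edges rather than vertices, precisely so that its expectation is superlinear in $m$; as written, your tail bound is quantitatively insufficient and the concentration lemma is not established.
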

We are now ready to prove our main theorem.

\begin{THM}
Let $0<p<1$ be a constant, $s\ge 3$ be a fixed integer and $G=G(n,p)$. Then whp $G$ contains a $K_s$-saturated subgraph on $(1+o(1))n\log_{1/(1-p)}n$ edges.
\end{THM}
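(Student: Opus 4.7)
The plan is to follow the three-stage construction sketched right after Observation~2.3, pushed through to all $s\ge 3$. Let $\alpha=1/(1-p)$; by Observation~2.3 it suffices to build a $K_s$-free subgraph $H\subs G(n,p)$ with $(1+o(1))n\log_\alpha n$ edges that completes all but $o(n\log n)$ missing pairs of $G$, since a maximal $K_s$-free supergraph of such $H$ inside $G$ is then $K_s$-saturated with asymptotically the same number of edges. The construction uses disjoint sets $A_1,A_2,A_3\subs[n]$ with $a_1:=|A_1|=(1+o(1))\log_\alpha n/p$, $a_2:=|A_2|=O(\log n)$, $a_3:=|A_3|\in\omega(1)\cap o(\log n)$, together with a partition $B=B_2\sqcup B_3$ of $B:=[n]\setminus(A_1\cup A_2\cup A_3)$ chosen after $G$ is revealed.

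Into $H$ I put: all $G$-edges between $A_1$ and $B$; all $G$-edges between $A_2$ and $B_2$; all $G$-edges between $A_3$ and $A_2\cup B_3$; and, for $s\ge 4$, a Krivelevich $K_{s-1}$-free subgraph of $G[A_i]$ inside each $A_i$ supplied by Theorem~1.6, in which every $a_i/\log^3 a_i$ vertices span a $K_{s-2}$. The first block contributes $(1+o(1))pa_1|B|=(1+o(1))n\log_\alpha n$ edges; the rest contribute $pa_2|B_2|=o(n\log n)$ once $|B_2|=o(n)$, $pa_3n=o(n\log n)$ because $a_3=o(\log n)$, and $O(\log^2 n)$ respectively, for a total of $(1+o(1))n\log_\alpha n$.

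The completion argument consists of three applications of Lemma~4. Stage~1, with $A=A_1$ and $E=\binom{B}{2}$, gives expected uncompleted pairs $\le (1-p^2)^{a_1(1-o(1))}\binom{n}{2}=n^{2-\gamma+o(1)}$, where $\gamma:=|\log(1-p^2)|/(p|\log(1-p)|)\in(0,1)$. I then define $B_2$ so that almost all of these uncompleted pairs lie in $\binom{B_2}{2}$ while $|B_2|=o(n)$; concretely, $B_2$ is the set of $v\in B$ whose $A_1$-degree is around the \emph{dominant degree class} $k^\ast=\frac{p}{1+p}a_1$, the value of $k$ maximising $(1-p)^k\,\Prb[|N_G(v)\cap A_1|=k]$. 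Stage~2, with $A=A_2$ and $E=\binom{B_2}{2}$, then completes essentially all pairs inside $B_2$ because $(1-p^2)^{a_2(1-o(1))}|B_2|^2=o(1)$ once $a_2\ge 2\log_{1/(1-p^2)}|B_2|+\omega(1)=O(\log n)$. Stage~3, with $A=A_3$ and $E=A_2\times B_3$, shrinks the remaining $\Theta(n\log n)$ missing $A_2$-$B_3$ pairs by a factor $(1-p^2)^{a_3(1-o(1))}=o(1)$, leaving $o(n\log n)$ uncompleted. All other missing-pair categories ($\binom{A_i}{2}$, $A_i\times A_j$, $A_3\times B_2$, etc.) total $O(\log^2 n)=o(n\log n)$ and are absorbed via Observation~2.3.

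The main obstacle comes in two forms. First, one must verify that $H$ itself is $K_s$-free. For $s=3$ this is automatic because $H$ is tripartite with parts $A_1\cup A_2$, $A_3$, $B$ and a short case check rules out any cross-triangle (the potentially dangerous $A_2$-$A_3$-$B_2$ configuration is killed by the absent $A_3$-$B_2$ edges, and similarly for the other patterns); for $s\ge 4$ the Krivelevich layers can combine with the inter-$A_i$ cross edges to create $K_s$'s, and a careful bookkeeping of which such cross edges may safely remain in $H$ (a harmless $O(\log^2 n)$ adjustment) is required. Second, and more delicate, is the concentration claim that the $n^{2-\gamma}$ Stage~1 uncompleted pairs really live in $\binom{B_2}{2}$; a naive degree-threshold fails because $\gamma<1$, so instead one uses local Chernoff estimates (Claim~2.1) around the dominant degree class $k^\ast$, together with the factorisation $\Prb[\{u,v\}\text{ uncompleted}\mid N_G(u)\cap A_1]=(1-p)^{|N_G(u)\cap A_1|}$ (for $s=3$; analogous for $s\ge 4$), to show that pairs with both endpoints in a small window around $k^\ast$ account for $(1-o(1))|U_1|$ of the Stage~1 uncompleted pairs while the window itself contains only $o(n)$ vertices.
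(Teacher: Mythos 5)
Your architecture is the same as the paper's (three layers $A_1,A_2,A_3$, Krivelevich subgraphs inside each $A_i$, Observation~2.3 to absorb $o(n\log n)$ bad pairs), but the step you yourself flag as "more delicate" contains a genuine quantitative gap as stated. You propose to show that pairs with both endpoints in a \emph{small window} around $k^\ast=\frac{p}{1+p}a_1$ account for a $(1-o(1))$ fraction of the $|U_1|=n^{2-\gamma+o(1)}$ Stage-1 uncompleted pairs. Even if true, this is not enough: since $\gamma<1$, the leftover $o(|U_1|)=o(n^{2-\gamma})$ can still be $\gg n\log n$, so Observation~2.3 cannot absorb it. What you actually need is that the uncompleted pairs \emph{outside} $\binom{B_2}{2}$ number $O(n\,\mathrm{polylog}\,n)$. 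Writing $\Prb[\{u,v\}\ \text{uncompleted},\ \deg_{A_1}u=j]=(1-p^2)^{a_1}\Prb[\Bin(a_1,\tfrac{p}{1+p})=j]$ and optimizing, one finds the window must extend all the way up to the mean degree $pa_1$: the contribution of degrees $j$ between $k^\ast$ and $pa_1$ is $n^{2}e^{-(D(j/a_1\| p)+ (j/a_1)|\log(1-p)|)a_1}$, which only drops to $O(n)$ at $j=pa_1$ (indeed $D(p\|\tfrac{p}{1+p})=(1-\gamma)p|\log(1-p)|$ exactly). So $B_2$ must be the one-sided set $\{v:\deg_{A_1}(v)\lesssim pa_1\}$ — but then $|B_2|$ is about $n/2$, not $o(n)$, unless $a_1$ carries a second-order boost. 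The paper resolves exactly this tension by taking $a_1=\frac1p(1+\frac{3}{\log\log_\alpha n})\log_\alpha n$ and thresholding at $(1+\frac{2}{\log\log_\alpha n})\log_\alpha n$, so that the threshold sits $\frac{\log_\alpha n}{\log\log_\alpha n}$ below the mean (giving $|B_2|=o(n/\log n)$ by Chernoff) while every $v\notin B_2$ still satisfies $(1-p)^{\deg_{A_1}(v)}\le 1/n$, whence $\Exp[d_F(v)]\le 1$ and the pairs outside $\binom{B_2}{2}$ number $O(n\log\log n)$ whp. Your proposal never specifies the $o(1)$ term in $a_1$, and without it the argument cannot close.

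The second issue is the $K_s$-freeness of Stage 3 for $s\ge4$, which you defer to "careful bookkeeping" of removable cross edges. Removing edges is not the right fix: a $K_{s-2}$ inside the Krivelevich layer $G_2\subs G[A_2]$ together with an edge of the Krivelevich layer in $A_3$ completely joined to it is a $K_s$, and you cannot simply delete the offending $A_2$--$A_3$ edges because those are the edges doing the completing. The paper's actual mechanism is structural: $A_2$ is partitioned into $k=\chi(G[A_2])=O(a_2/\log a_2)$ independent sets $A_{2,i}$, $A_3$ is split into $2k$ blocks each still of size $\omega(1)$, and block $A_{3,i}$ is joined only to $A_{2,i}\cup B_3$, so every gluing is along an independent set and Lemma~2.4 is applied blockwise. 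This is a concrete construction step you would need to supply, not an $O(\log^2 n)$ adjustment. Your $s=3$ tripartiteness check and the remaining edge counts are fine.
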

\begin{proof}
Define $\alpha=\frac{1}{1-p}$, $\beta=\frac{1}{1-p^2}$, and set $a_1=\frac{1}{p}(1+ \frac{3}{\log\loga n})\loga n $, $a_2=(1 + \frac{2}{\log\logb n})\logb \binom{n}{2}$ and $a_3=\frac{a_2}{\sqrt{\log a_2}}=o(\log n)$. Let us also define $A_1,A_2,A_3$ to be some disjoint subsets of $[n]$ such that $|A_i|=a_i$, and let $B_1=[n]-(A_1\cup A_2\cup A_3)$.

Expose the edges of $G[A_1]$. By Theorem~\ref{thm:kriv} it contains a $K_{s-1}$-free subgraph $G_1$ with no $K_{s-2}$-free subset of size $\frac{a_1}{\log^3 a_1}$ whp. Let $H_1$ be the graph on vertex set $A_1\cup B_1$ such that $H_1[A_1]=G_1$, $H_1[B_1]$ is empty, and $H_1$ is identical to $G$ between $A_1$ and $B_1$. Now define $B_2$ to be the set of vertices in $B_1$ that are adjacent to fewer than 
$(1+\frac{2}{\log\loga n})\loga n$ vertices of $A_1$ in the graph $H_1$. We claim that $H_1$ completes all but $O(n\log\log n)$ of the vertex pairs in $B_1$ {\em not} induced by $B_2$.

To see this, let $F$ be the set of incomplete pairs in $B_1$ not induced by $B_2$. Now fix a vertex $v\in B_1-B_2$ and denote its neighborhood in $A_1$ by $N_v$. By definition, $|N_v|\ge (1+\frac{2}{\log\loga n})\loga n$. Let $d_F(v)$ be the degree of $v$ in $F$, i.e., the number of pairs $\{u,v\}\subs B_1$ that $H_1$ does not complete. Conditioned on $N_v$, the size of the common neighborhood of $v$ and some $u\in B_1$ is distributed as $\Bin(|N_v|,p)$, so by Claim~\ref{lem:chern} the probability that this is smaller than $\frac{a_1}{\log^3 a_1}\le \frac{|N_v|}{\log^2|N_v|}$ is at most $(1-p)^{|N_v|-|N_v|/\log|N_v|} \le (1-p)^{\loga n}= \frac{1}{n}$. On the other hand, if the common neighborhood contains at least $\frac{a_1}{\log^3 a_1}$ vertices in $A_1$, then it also induces a $K_{s-2}$, thus the pair is complete. Therefore $\Exp[d_F(v)]\le 1$ and $\Exp[\sum_{v\in B_2-B_1} d_F(v)]\le n$.

Here the quantity $\sum_{v\in B_2-B_1} d_F(v)$ bounds $|F|$, the number of incomplete edges in $B_1$ that are not induced by $B_2$, so by Markov's inequality this number is indeed $O(n\log\log n)$ whp. By the Observation above, we can temporarily ignore these edges, and concentrate instead on completing those induced by $B_2$. To complete them, we will use the (yet unexposed) edges touching $A_2$.

\medskip
The good thing about $B_2$ is that it is quite small: For a fixed $v\in B_1$, the size of its neighborhood in $A_1$ is distributed as $\Bin(a_1,p)$, so the probability that it is smaller than $(1+\frac{2}{\log\loga n})\loga n =pa_1-\frac{\loga n}{\log\loga n}$ is, by Claim~\ref{lem:chern}, at most $e^{-\loga n/ 4 (\log\loga n)^2} \ll \frac{1}{\log n}$. So by Markov, $|B_2|$, the number of such vertices, is smaller than $\frac{n}{\log n}$ whp.

Now expose the edges of $G[A_2]$. Once again, whp we can apply Theorem~\ref{thm:kriv} to find a $K_{s-1}$-free subgraph $G_2$ that has no large $K_{s-2}$-free induced subgraph. Let $H_2$ be the graph on $A_2\cup B_2$ such that $H_2[A_2]=G_2$, $H_2[B_2]$ is empty, and $H_2=G$ on the edges between $A_2$ and $B_2$. Let us apply Lemma~\ref{lem:edgecover} to $G_{A_2}=H_2$ with $E$ containing all vertex pairs in $B_2$. The lemma says that the expected number of incomplete pairs in $E$ is at most
\[  (1-p^2)^{a_2-\frac{a_2}{\log a_2}} \cdot \binom{|B_2|}{2}\le (1-p^2)^{\logb \binom{n}{2}}\cdot  \binom{n/\log n}{2}=o(1), \]
so by Markov, $H_1$ completes all of $E$ whp, in particular it completes all the edges in $G[B_2]$. Note that $|B_2| \le \frac{n}{\log n}$ also implies that $H_2$ only contains $O(n)$ edges, so adding them to $H_1$ does not affect the asymptotic amount of edges in our construction.

However, the edges connecting $A_2$ to $B_3=B_1-B_2$ are still incomplete, and there are $\Theta(n\log n)$ of them, too many to ignore using the Observation. The idea is to complete most of these using the edges between $A_3$ and $A_2\cup B_3$, but we need to be a little bit careful not to create copies of $K_s$ with the edges in $H_2[A_2]=G_2$. We achieve this by splitting up $A_2$ into independent sets.

\medskip
By Claim~\ref{lem:chrom}, we know that $k=\chi(G[A_2])=O\left(\frac{a_2}{\log a_2}\right)$, so $G_2\subs G[A_2]$ can also be $k$-colored whp. Let $A_2=\cup_{i=1}^k A_{2,i}$ be a partitioning into color classes, so here $G_2[A_{2,i}]$ is an empty graph for every $i$. Let us also split $A_3$ into $2k$ parts of size $a_4=a_3/2k$ and expose the edges in $G[A_3]$. Each of the $2k$ parts contains a $K_{s-1}$-free subgraph with no $K_{s-2}$-free subset of size $\frac{a_4}{\log^3 a_4}$ with the same probability $p_0$, and by Theorem~\ref{thm:kriv}, $p_0=1-o(1)$ (note that $a_4=\Omega(\sqrt{\log a_2})$ grows to infinity). This means that the expected number of parts not having such a subgraph is $2k(1-p_0)=o(k)$, so by Markov's inequality, whp $k$ of the parts, $A_{3,1},\ldots A_{3,k}$, do contain such subgraphs $G_{3,1},\ldots,G_{3,k}$.

Now define $H_{3,i}$ to be the graph with vertex set $A_{3,i}\cup A_{2,i}\cup B_3$ such that $H_{3,i}[A_{3,i}]=G_{3,i}$, $H_{3,i}[A_{2,i}\cup B_3]$ is empty, and the edges between $A_{3,i}$ and $A_{2,i}\cup B_3$ are the same as in $G$. Then we can apply Lemma~\ref{lem:edgecover} to show that $H_{3,i}$ is expected to complete all but a $(1-p^2)^{a_4-a_4/\log a_4}$ fraction of the $A_{2,i}$-$B_3$ pairs. This means that the expected number of edges between $A_2$ and $B_3$ that $H_3=\cup_{i=1}^k H_{3,i}$ does not complete is bounded by 
\[ (1-p^2)^{\Omega(\sqrt{\log a_2})} \cdot |A_2||B_3| \le (1-p^2)^{\Omega(\sqrt{\log\log n})}\cdot O(n\log n).\]
So if $F'$ is the set of incomplete $A_2$-$B_3$ edges, then another application of Markov gives $|F'|=o(n\log n)$ whp.

\medskip
It is easy to check that $H=H_1\cup H_2\cup H_3$ is $K_s$-free, since it can be obtained by repeatedly ``gluing'' together $K_s$-free graphs along independent sets, and such a process can never create an $s$-clique. To estimate the size of $H$, note that Claim~\ref{lem:chern} implies that whp all the vertices in $A_1$ have $(1+o(1))p|B_1|$ neighbors in $B_1$, so $H_1$ contains $(1+o(1))n\loga n$ edges. We have noted above that $H_2$ contains $O(n)$ edges and $H_3$ clearly contains at most $|A_2\cup B_3||A_3|=O\left(n\frac{\log n}{\sqrt{\log\log n}}\right)$ edges. So in total, $H$ contains $(1+o(1))n\loga n$ edges.

Finally, the only edges $H$ is not saturated at are either in $F$, in $F'$, touching $A_3$, or induced by $A_1\cup A_2$. There are $o(n\log n)$ edges of each of these four kinds, so any maximal $K_s$-free supergraph $H'$ of $H$ has $(1+o(1))n\loga n$ edges. This $H'$ is $K_s$-saturated, hence the proof is complete.
\end{proof}

\section{Weak saturation} \label{sec:weaksat}

In this section we prove Theorem~\ref{thm:weaksat} about the weak saturation number of random graphs of constant density. In fact, we prove the statement for a slightly more general class of graphs, satisfying certain pseudorandomness conditions. We will need some definitions to formulate this result.

Given a graph $G$ and a vertex set $X$, a {\em clique extension} of $X$ is a vertex set $Y$ disjoint from $X$ such that $G[Y]$ is a clique and $G[X,Y]$ is a complete bipartite graph. We define the size of this extension to be $|Y|$.

The following definition of goodness captures the important properties needed in our proof.

\begin{DEF}
A graph $G$ is $(t,\gamma)$-good if $G$ satisfies the following properties:
\begin{enumerate}
 \item[P1.] For any vertex set $X$ of size $x$ and any integer $y$ such that $x,y\le t$, $G$ contains at least $\gamma n$ disjoint clique extensions of $X$ of size $y$.
 \item[P2.] For any two disjoint sets $S$ and $T$ of size at least $\gamma n/2$, there is a vertex $v\in T$ and $X\subs S$ of size $t-1$ such that $X\cup v$ induces a clique in $G$.
\end{enumerate}
\end{DEF} 

It is not hard to see that the Erd\H{o}s-R\'enyi random graphs satisfy the properties:

\begin{CLAIM} \label{lem:weak_conc}
Let $0<p<1$ be a constant and let $t$ be a fixed integer. Then there is a constant $\gamma=\gamma(p,t)>0$ such that whp $G=G(n,p)$ is $(t,\gamma)$-good.
\end{CLAIM}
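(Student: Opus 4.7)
My plan is to verify the two properties P1 and P2 separately, using Chernoff-type bounds, Janson's inequality, and union bounds; the constant $\gamma$ will be determined by P1 and then shown to suffice for P2.

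For P1, fix $X \subseteq [n]$ of size $x \le t$ and an integer $y \le t$. The number of clique extensions of $X$ of size $y$ has expectation $\mu = \binom{n-x}{y} p^{xy + \binom{y}{2}} = \Theta(n^y)$. Viewing this count as a sum of indicators, one per $y$-subset of $[n] \setminus X$, the associated dependency sum---over ordered pairs of candidate $y$-sets sharing at least one vertex (hence some $X$-to-$Y$ edge)---is $\Delta = \Theta(n^{2y-1})$, dominated by pairs whose intersection has size $1$. Janson's inequality then gives the lower tail estimate $\Prb[\textrm{count} \le \mu/2] \le \exp(-\Omega(\mu^2/(\mu + \Delta))) = e^{-\Omega(n)}$, and a union bound over the $O(n^t)$ choices of $(X, y)$ shows that whp every admissible pair admits at least $c_1 n^y$ clique extensions, for some constant $c_1 = c_1(p, t) > 0$. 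Since any single vertex lies in at most $O(n^{y-1})$ such extensions of a given $X$, a greedy procedure extracts $\gamma n$ pairwise disjoint extensions provided $\gamma \le \gamma_1(p, t)$ for a sufficiently small positive constant $\gamma_1$; fix $\gamma := \gamma_1$.

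For P2, set $c_0 := p\gamma/8$ and prove two auxiliary whp claims. First, every $W \subseteq [n]$ with $|W| \ge c_0 n$ contains a $K_{t-1}$: assuming $t \ge 3$ (the cases $t \le 2$ being trivial), Janson's inequality applied to the $\binom{|W|}{t-1}$ candidate cliques in $G[W]$ gives $\Prb[G[W] \not\supseteq K_{t-1}] \le e^{-\Omega(|W|^2)} = e^{-\Omega(n^2)}$, which comfortably beats the $2^n$ union bound over $W$. Second, for every $S$ with $|S| = m \ge \gamma n/2$, fewer than $\gamma n/2$ vertices $v \notin S$ satisfy $|N(v) \cap S| < pm/2$: each such bad vertex occurs independently with probability at most $e^{-pm/8} \le e^{-p\gamma n/16}$ by Chernoff, and a second Chernoff bound on $\Bin(n, e^{-p\gamma n/16})$ yields $\Prb[\textrm{count} \ge \gamma n/2] \le e^{-\Omega(n^2)}$, again beating the $2^n$ union bound over $S$. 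Combining the two claims: for any disjoint $S, T$ with $|S|, |T| \ge \gamma n/2$, some $v \in T$ is not bad, so $|N(v) \cap S| \ge pm/2 > c_0 n$, and the first claim produces a $K_{t-1}$ on a vertex set $X \subseteq N(v) \cap S \subseteq S$, so $X \cup \{v\}$ is the $K_t$ required by P2.

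The main technical point is that a single constant $\gamma$ must satisfy both properties simultaneously: P1 wants $\gamma$ small (to make extraction of $\gamma n$ disjoint extensions feasible), while the $2^n$ union bounds in P2 might in principle prefer $\gamma$ large. This tension is resolved by the observation that the Janson and Chernoff tails appearing in P2 are of order $e^{-\Omega(n^2)}$ for any positive $\gamma$, easily dominating the $2^n$ union bounds regardless of the value of $\gamma \in (0, \gamma_1]$.
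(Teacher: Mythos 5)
Your proof is correct, and its overall architecture matches the paper's (separate verification of P1 and P2, each by concentration plus a union bound, with P2 reduced to ``find $v\in T$ with linearly many neighbours in $S$, then find a $K_{t-1}$ in that neighbourhood''). The main difference is in P1: the paper sidesteps both Janson's inequality and your greedy disjointification step by first partitioning $V-X$ into $\lfloor (n-x)/y\rfloor$ disjoint $y$-blocks and asking how many blocks happen to be clique extensions of $X$; since the blocks are disjoint, these indicators are independent, a plain Chernoff bound gives the $e^{-\Omega(n)}$ tail, and the $\gamma n$ extensions produced are disjoint by construction. Your route (count all $\binom{n-x}{y}$ candidate extensions, apply Janson with $\mu=\Theta(n^y)$ and $\Delta=\Theta(n^{2y-1})$, then greedily extract disjoint ones using the fact that each vertex lies in $O(n^{y-1})$ candidates) is valid --- just note that when $x=0$ two candidates sharing a single vertex are actually independent, so $\Delta$ is even smaller there and the bound only improves --- but it buys nothing here beyond what the elementary partitioning trick already gives. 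In P2 the two arguments are essentially the same; the paper bounds the probability that \emph{every} $v\in T$ has few neighbours in $S$ (using independence over $v\in T$ to get $e^{-\Omega(n^2)}$), whereas you bound the total number of bad vertices outside $S$ via a second Chernoff bound, and the paper gets the ``every large subset contains a $K_{t-1}$'' statement from a union bound (or Krivelevich's theorem) rather than Janson; both yield tails small enough to absorb the $2^{2n}$ choices of $(S,T)$ for any fixed $\gamma>0$, which is exactly the tension you correctly identify and resolve.
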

\begin{proof}
To prove property P1, fix $x,y\le t$ and a set $X$ of size $x$, and split $V-X$ into groups of $y$ elements (with some leftover): $V_1,\ldots,V_m$ where $m=\floor{\frac{n-x}{y}}$. The probability that for some $i\in [m]$, all the pairs induced by $V_i$ or connecting $X$ and $V_i$ are edges in $G$ is $\tilde{p}=p^{\binom{y+x}{2}-\binom{x}{2}}$, which is a constant. Let $\mB_{X,y}$ be the event that fewer than $m\tilde{p}/2$ of the $V_i$ satisfy this. By Claim~\ref{lem:chern}, $\Prb(\mB_{X,y})\le e^{-m\tilde{p}/8}$.

Now set $\gamma=p^{\binom{2t}{2}}/4t$, then $\gamma n\le m\tilde{p}/2$, so if $\mB_{X,y}$ does not hold then there are at least $\gamma n$ different $V_i$'s that we can choose to be the sets $Y_i$ we are looking for. On the other hand, there are only $\sum_{x=0}^t\binom{n}{x}\le t\binom{n}{t} \le t\cdot e^{t\log n}$ choices for $X$ and $t$ choices for $y$, so
\[ \Prb(\cup_{X,y} \mB_{X,y}) \le t^2 \cdot e^{t\log n}\cdot e^{-\gamma n/4} = o(1),\]
hence P1 is satisfied whp.

\medskip
To prove property P2, notice that Theorem~\ref{thm:kriv} implies that whp any induced subgraph of $G$ on at least $\frac{n}{\log^3 n}$ vertices contains a clique of size $t$.\footnote{We should point out that this statement is much weaker than Theorem~\ref{thm:kriv} and can be easily proved directly using a simple union bound argument.} Let us assume this is the case and fix $S$ and $T$. Then if a vertex $v\in T$ has at least $\frac{n}{\log^3 n}$ neighbors in $S$, then the neighborhood contains a $t-1$-clique in $G$ that we can choose to be $X$. So if property P2 fails, then no such $v$ can have $\frac{n}{\log^3 n}$ neighbors in $S$.

The neighborhood of $v$ in $S$ is distributed as $\Bin(|S|,p)$, so the probability that $v$ has fewer than $\frac{n}{\log^3 n}\le \frac{|S|}{\log^2 |S|}$ neighbors in $S$ is at most $(1-p)^{|S|-|S|/\log|S|}\le e^{-p|S|/2}\le e^{-p\gamma n/4}$ by Claim~\ref{lem:chern}. These events are independent for the different vertices in $S$, so the probability that P2 fails for this particular choice of $S$ and $T$ is $e^{-\Omega(n^2)}$. But we can only fix $S$ and $T$ in $2^{2n}$ different ways, so whp P2 holds for $G$.
\end{proof}

Now we are ready to prove the following result, which immediately implies Theorem~\ref{thm:weaksat}.

\begin{THM} \label{thm:weak_gen}
Let $G$ be $(2s,\gamma)$-good. Then
\[ \wsat(G,K_s)=  (s-2)n- \binom{s-1}{2}. \]
\end{THM}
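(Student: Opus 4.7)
The plan is to prove matching lower and upper bounds, both leveraging the goodness of $G$. The lower bound generalizes Kalai's linear-algebraic proof of Theorem~\ref{thm:weak_clique}, while the upper bound is an explicit iterative construction.

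For the lower bound, I would adapt Kalai's exterior-algebra argument: assign to each vertex $v$ a generic vector $f(v)\in \mathbb{R}^{s-2}$ and to each potential edge $e$ a \emph{Kalai vector} $\xi_e$ in an appropriate ambient space, built so that whenever $e=uv$ completes a new $K_s$ on $\{u,v,w_1,\ldots,w_{s-2}\}$, the vector $\xi_e$ lies in the span of the $\xi$-vectors of the other $\binom{s}{2}-1$ edges of that $K_s$. Applying this along the percolation process of any weakly $K_s$-saturating $H\subs G$ yields
\[ \mathrm{span}\{\xi_e:e\in E(G)\}\subs \mathrm{span}\{\xi_e:e\in E(H)\}, \]
so $|E(H)|\ge \mathrm{rank}\{\xi_e:e\in E(G)\}$. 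The key observation is that for $(2s,\gamma)$-good $G$ this rank equals $(s-2)n-\binom{s-1}{2}$, the full Kalai rank over $K_n$: by P1 (with $X=\{u,v\}$, $y=s-2$), every vertex pair has an $(s-2)$-clique in its common $G$-neighborhood, so $G$ itself weakly $K_s$-saturates $K_n$ in the most trivial way; the span-preservation step then gives $\mathrm{span}\{\xi_e:e\in E(K_n)\}\subs \mathrm{span}\{\xi_e:e\in E(G)\}$. Together with the immediate reverse inclusion, the spans coincide, and Kalai's computation over $K_n$ supplies the rank $(s-2)n-\binom{s-1}{2}$.

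For the upper bound, I would construct $H\subs G$ with exactly $(s-2)n-\binom{s-1}{2}$ edges by \emph{iterative clique extension}. Use P1 ($X=\emptyset$, $y=s-1$) to find an $(s-1)$-clique $W_0\subs V$ in $G$, and initialize $H$ with its $\binom{s-1}{2}$ edges. At step $i\ge 1$, having built $H_{i-1}$ on a vertex set $S_{i-1}$ of size $s-2+i$, I would pick a new vertex $u_i\in V\setminus S_{i-1}$ together with an $(s-2)$-clique $C_i$ of $H_{i-1}$ satisfying $C_i\subs N_G(u_i)$, and add the $s-2$ edges $\{u_ic : c\in C_i\}$. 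An inductive check confirms $H$ remains $K_s$-free (each new vertex acquires only $s-2$ neighbors), and the final edge count is $\binom{s-1}{2}+(n-s+1)(s-2)=(s-2)n-\binom{s-1}{2}$. Feasibility at step $i$ follows from P1: any existing $(s-2)$-clique has $\ge \gamma n$ extensions of size $1$ in $G$; while $|V\setminus S_{i-1}|\ge \gamma n$ at least one such extension falls outside $S_{i-1}$, and for the final $\gamma n$ steps I would combine the many $(s-2)$-cliques now present in $H_{i-1}$ with property P2 applied to $S=S_{i-1}$, $T=V\setminus S_{i-1}$ to locate a $K_{2s}$ in $G$ straddling the two sides and extract a suitable attachment.

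The main obstacle is verifying that the constructed $H$ indeed weakly $K_s$-saturates $G$. Since the percolation may only use $G$-edges, the $K_s$ certifying any newly added edge must lie entirely within the current percolation graph, a subgraph of $G$. I would argue by induction along the construction order: after step $i$, the fresh $(s-1)$-clique $\{u_i\}\cup C_i$ serves as a seed enabling some incident $G$-edges at $u_i$ to enter the closure; a bootstrap then extends the closure to all of $E(G)$, since by P1 every pair $\{u,v\}$ has an $(s-2)$-clique in its common $G$-neighborhood, and once enough edges are active this clique is realized within the current closure, so $uv$ becomes addable. The technical heart of the proof lies in arranging an order of additions that avoids circular dependencies and ensures every needed $(s-2)$-clique is activated before the edge it certifies; this is where the full strength of properties P1 and P2 would be brought to bear.
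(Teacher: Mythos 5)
Your lower bound is sound and is essentially the paper's argument: property P1 with $X=\{u,v\}$, $y=s-2$ shows that every non-edge of $G$ closes a $K_s$, so $G$ (strongly, hence weakly) saturates $K_n$, and composing with Kalai's theorem transfers the bound $(s-2)n-\binom{s-1}{2}$ to any $H$ weakly saturated in $G$. Re-deriving the exterior-algebra machinery is unnecessary -- weak saturation is transitive, so one can cite Theorem~\ref{thm:weak_clique} as a black box -- but this is cosmetic.

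The upper bound, however, has a genuine gap, and you have located it yourself: the verification that $H$ weakly saturates $G$ is exactly ``the technical heart'' you defer, and nothing in your sketch resolves it. The difficulty is real because your attachment cliques $C_i$ are unanchored. Two concrete problems. First, feasibility: P1 produces $(s-2)$-cliques of $G$, not of $H_{i-1}$, so demanding that $C_i$ be a clique of the sparse graph $H_{i-1}$ inside $N_G(u_i)$ is not justified by P1; and the counting ``while $|V\setminus S_{i-1}|\ge\gamma n$ at least one extension falls outside $S_{i-1}$'' fails, since all $\gamma n$ extension vertices of a given clique may lie inside $S_{i-1}$ as soon as $|S_{i-1}|\ge\gamma n$, which happens long before the ``final $\gamma n$ steps.'' Second, if you repair feasibility by only requiring $C_i\cup\{u_i\}$ to be a clique of $G$ inside $S_{i-1}$ (which is the right move), then the percolation must first recover the edges of $G[C_i]$ before the seed clique at $u_i$ exists, and then must propagate from $s-2$ edges at $u_i$ to all of $N_G(u_i)$; your appeal to ``P1 gives every pair a common $(s-2)$-clique'' does not do this, because that clique must be realized inside the current closure and inside the current neighborhood of $u_i$, which is precisely the circularity you acknowledge.

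The paper's proof shows how to break the circularity, and the mechanism is worth internalizing. All attachments are anchored to one fixed $(s-2)$-clique $C$: the base graph joins $C$ to its common neighborhood $V'$, and each $C_i$ is chosen as a clique extension of $C\cup\{v_i\}$ of size $s-2$, which automatically lies in $V'\subs V_{i-1}$ -- this settles feasibility at every step. The saturation proof is then an induction on $i$ with a controlled order of additions: first restore all of $G[V_{i-1}]$ (induction hypothesis), so every $G$-clique in $V_{i-1}$ is available; then apply P1 to $C\cup C_i\cup\{v_i\}$ (a set of size at most $2s-3$, which is why $2s$-goodness is needed) to give $v_i$ an initial closure-neighborhood $N$ of size $\ge\gamma n$ inside $V'$; then bootstrap, adding $vv_i$ whenever some $(s-2)$-clique of $G[N]$ lies in $N_G(v)$. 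If the process stalls, P2 with $S=N$ shows fewer than $\gamma n/2$ edges at $v_i$ remain, and P1 applied to $C\cup\{v,v_i\}$ produces $\gamma n$ \emph{disjoint} rescue cliques, of which at most $\gamma n/2$ can meet a stuck edge -- a contradiction. Without this anchoring and this quantitative stalling argument, your construction is a plausible candidate but not a proof.
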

\begin{proof}
To prove the lower bound on $\wsat(G,K_s)$, it is enough to show that $G$ is weakly saturated in $K_n$. Indeed, if $H$ is weakly saturated in $G$ and $G$ is weakly saturated in $K_n$, then $H$ is weakly saturated in $K_n$: We can just add edges one-by-one to $H$, obtaining $G$, and then keep adding edges until we reach $K_n$ in such a way that every added edge creates a new copy of $K_s$. But then Theorem~\ref{thm:weak_clique} implies that $H$ contains at least $(s-2)n-\binom{s-1}{2}$ edges, which is what we want.

Actually, $G$ is not only weakly, but strongly saturated in $K_n$: property P1 implies that for any vertex pair $X=\{u,v\}$, $G$ contains a clique extension of $X$ of size $s-2$. But then adding the edge $uv$ to this subgraph creates the copy of $K_s$ that we were looking for.

\medskip

Let us now look at the upper bound on $\wsat(G,K_s)$.
Fix a set $C$ of $s-2$ vertices that induces a complete graph (such a $C$ exists by property P1, as it is merely a clique extension of $\emptyset$ of size $s-2$). Our saturated graph $H$ will consist of $G[C]$, plus $s-2$ edges for each vertex $v\in V-C$, giving a total of $\binom{s-2}{2}+(s-2)(n-s+2) =(s-2)n- \binom{s-1}{2}$ edges. We build $H$ in steps.

Let $V'$ be the set of vertices $v\in V-C$ adjacent to all of $C$. 
We start our construction with the graph $H_0\subs G$ on vertex set $V_0=C\cup V'$ that contains all the edges touching $C$.

Once we have defined $H_{i-1}$ on $V_{i-1}$, we pick an arbitrary vertex $v_i\in V-V_{i-1}$, and choose a set $C_i\subs V_{i-1}$ of $s-2$ vertices that induces a clique in $G$ with $v_i$. Again, we can find such a $C_i$ as a clique extension of $v_i\cup C$. By the definition of $V'$, this $C_i$ will lie in $V'$. Then we set $V_i=V_{i-1}\cup v_i$ and define $H_i$ to be the graph on $V_i$ that is the union of $H_{i-1}$ and the $s-2$ edges connecting $v_i$ to $C_i$. Repeating this, we eventually end up with some graph $H=H_l$ on $V=V_l$ that has $\binom{n}{2}-\binom{n-s+2}{2}$ edges. We claim it is saturated.

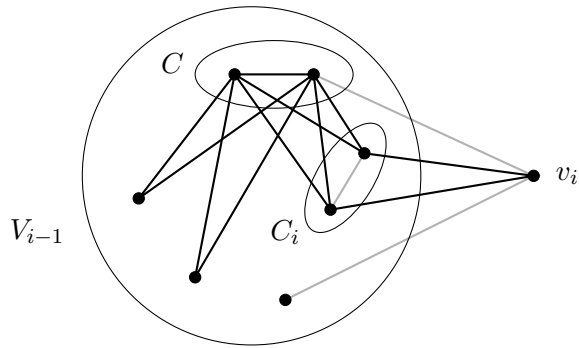
\begin{figure}[ht]
\begin{center}
\begin{tikzpicture}[scale=1.5]
\draw[black!30!white, thick] (1,0.7)--(0.7,0.2);
\draw[black!30!white, thick] (.55,1.4)--(2.5,0.5)--(0.3,-0.6);

\draw (.2,1.4) ellipse (.7 and .3);
\node at (-.7,1.5) {$C$};
\draw (0,0.5) ellipse (1.5 and 1.5);

\draw[fill] (-.15,1.4) circle (.05);
\draw[fill] (.55,1.4) circle (.05);
\draw[thick] (-.15,1.4)--(.55,1.4);

\foreach \p in {(-1,0.3),(-0.5,-0.4),(1,0.7),(0.3,-0.6),(0.7,0.2)}
  \draw[fill] \p circle (.05);

\foreach \p in {(-1,0.3),(-0.5,-0.4),(1,0.7),(0.7,0.2)}
  \draw[thick] (-.15,1.4)--\p--(.55,1.4);

\draw[fill] (2.5,0.5) circle (.05); 
\node at (2.8,0.5) {$v_i$};

\draw[thick] (1,0.7)--(2.5,0.5)--(0.7,0.2);
\draw[rotate=58] (.85,-0.45) ellipse (.55 and .25);
\node at (.3,0) {$C_i$};
\node at (-1.9,0) {$V_{i-1}$};

\end{tikzpicture}
\caption{Weak $K_4$-saturation. Black edges are in $H$, gray edges are in $G$ but not in $H$.}  \label{fig:weaksat}
\end{center}
\end{figure}

\medskip
We really prove a bit more: we show, by induction, that $H_i$ is weakly $K_s$-saturated in $G[V_i]$ for every $i$.
This is clearly true for $i=0$: any edge of $G[V_0]$ not contained in $H_0$ is induced by $V'$, and forms an $s$-clique with $C$.

Now assume the statement holds for $i-1$. We want to show that we can add all the remaining edges in $G_i=G[V_i]$ to $H_i$ one-by-one, each time creating a $K_s$. By induction, we can add all the edges in $G_{i-1}$, so we may assume they are already there. Then the only missing edges are the ones touching $v_i$.

If $v\in V_i$ is a clique extension of $C_i\cup v_i$ then adding $vv_i$ creates a $K_s$, so we can add this edge to the graph. Property P1 applied to $C\cup C_i\cup v_i$ to find clique extensions of size 1 shows that there are at least $\gamma n$ such vertices in $V'\subs V_i$. Let $N$ be the new neighborhood of $v_i$ after these additions, then $|N|\ge \gamma n$.

Now an edge $vv_i$ can also be added if $v \cup C'$ induces a complete subgraph in $G$, where $C'$ is any $s-2$-clique in $G_i[N]$. Let us repeatedly add the available edges (updating $N$ with every addition). We claim that all the missing edges will be added eventually.

Suppose not, i.e., some of the edges in $G_i$ touching $v_i$ cannot be added using this procedure. There cannot be more than $\gamma n/2$ of them, as that would contradict property P2 with $S=N$ and $T$ being the remaining neighbors of $v_i$ in $V_i$. But then take one such edge, $vv_i$, and apply property P1 to $C\cup \{v, v_i\}$. It shows that there are $\gamma n$ disjoint clique extensions of size $s-2$, that is, $\gamma n$ disjoint $s-2$-sets in $V'$ that form $s$-cliques with $\{v,v_i\}$.

But at most $\gamma n/2$ of these cliques touch a missing edge other than $vv_i$, so there is a $C'$ among them that would have been a good choice for $v$. This contradiction establishes our claim and finishes the proof of the theorem.
\end{proof}

\section{Concluding remarks} \label{sec:last}

Many of the saturation results also generalize to hypergraphs. For example, Bollob\'as \cite{B65} and Alon \cite{A85} proved that $\sat(\Krn,\Krs)=\wsat(\Krn,\Krs)=\binom{n}{r}-\binom{n-s+r}{r}$, where $\Krt$ denotes the complete $r$-uniform hypergraph on $t$ vertices. It would therefore be very interesting to see how saturation behaves in $\Grnp$, the random $r$-uniform hypergraph.

With some extra ideas, our proofs can be adapted to give tight results about $\Krr$-saturated hypergraphs, but the general $\Krs$-saturated case appears to be more difficult. It is possible, however, that our methods can be pushed further to solve these questions, and we plan to return to the problem at a later occasion.

\medskip
Another interesting direction is to study the saturation problem for non-constant probability ranges. For example, Theorem~\ref{thm:main} about strong saturation can be extended to the range $\frac{1}{\log^{\eps(s)} n}\le p\le 1-\frac{1}{o(n)}$ in a fairly straightforward manner. 
With a bit of extra work, the case of $K_3$-saturation can be further extended to $n^{-1/2}\ll p\ll 1$, where one can prove $\sat(G(n,p),K_3)=(1+o(1))\frac{n}{p}\log np^2$ as sketched below.

With $p=o(1)$, the upper bound construction can be simplified: Let $G_0\subs G(n,p)$ contain all edges between a set $A$ of $(1+o(1))\frac{1}{p^2}\log np^2$ vertices and $V-A$. Then $G_0$ will whp complete all but $O(n/p)$ edges of $G(n,p)$, so it can be extended to a $K_3$-saturated subgraph of the desired size. The matching lower bound can be proved along the lines of Theorem~\ref{thm:main_lower}, with a more careful analysis and an extra argument showing that the edges induced by $B$ cannot complete many more than $\frac{n}{p}\log^2 n$ edges.

On the other hand, note that for $p\ll n^{-1/2}$ there are much fewer triangles in $G(n,p)$ than edges, so any saturated graph must contain almost all the edges, i.e., $\sat(G(n,p),K_3)=(1+o(1))\binom{n}{2}p$. This gives us a fairly good understanding of $K_3$-saturation in the random setting.

However, $K_s$-saturation in general appears to be more difficult in sparse random graphs. In particular, when $p$ is much smaller than $\frac{1}{\log n}$, the set $A$ in our construction becomes too sparse to apply Theorem~\ref{thm:kriv} on it. This suggests that here the saturation numbers might depend more on $s$.

\medskip
It is also not difficult to extend our weak saturation result, Theorem~\ref{thm:weaksat}, to the range $n^{-\eps(s)}\le p\le 1$. 
A next step could be to obtain results for smaller values of $p$, in particular, it would be nice to determine the exact probability range 
where the weak saturation number is $(s-2)n-\binom{s-1}{2}$. Note that our lower bound holds as long as $G(n,p)$ is weakly $K_s$-saturated in $K_n$. 
This problem was studied by Balogh, Bollob\'as and Morris \cite{BBM12}, who showed that the probability threshold for $G(n,p)$ to be 
weakly $K_s$-saturated is around $p\approx n^{-1/\lambda(s)}\polylog n$, where $\lambda(s)=\frac{\binom{s}{2}-2}{s-2}$. It might be possible 
that $\wsat(G(n,p),K_s)$ changes its behavior at the same threshold.

\medskip
$F$-saturation in the complete graph has been studied for various different graphs $F$ (see \cite{FFS} for references). For example, K\'aszonyi and Tuza \cite{KT86} showed that for any fixed (connected) graph $F$ on at least 3 vertices, $\sat(n,F)$ is linear in $n$. As we have seen, this is not true in $G(n,p)$. However, analogous results in random host graphs could be of some interest.

\bigskip

\noindent{\bf Acknowledgements.}\, 
We would like to thank Choongbum Lee and Matthew Kwan for stimulating discussions on the topic of this paper.

\end{document}